\newtheorem{theorem}{Theorem}
\newtheorem{prop}[theorem]{Proposition}
\newtheorem{lemma}[theorem]{Lemma}
\theoremstyle{remark}
\newtheorem{definition}{Definition}
\numberwithin{theorem}{section}
\newtheorem{rem}[theorem]{Remark}
\newcommand{\Om} {\Omega}
\newcommand{\be} {\begin{equation}}
\newcommand{\ee} {\end{equation}}
\newcommand{\bea} {\begin{eqnarray}}
\newcommand{\eea} {\end{eqnarray}}
\newcommand{\Bea} {\begin{eqnarray*}}
\newcommand{\Eea} {\end{eqnarray*}}
\def\R{{\mathbb R}}
\def\R{{\mathbb R}}
\numberwithin{equation}{section}
\begin{document}

\title[Kirchhoff equation with double criticality] { Kirchhoff type elliptic equations with double criticality in  Musielak-Sobolev spaces}
\author[Shilpa Gupta]
{Shilpa Gupta}
\address{Shilpa Gupta\hfill\break
Department of Mathematics\newline
 Birla Institute of Technology and Science Pilani \newline
  Pilani Campus, Vidya Vihar \newline
 Pilani, Jhunjhunu \newline
 Rajasthan, India - 333031}
 \email{p20180442@pilani.bits-pilani.ac.in; shilpagupta890@gmail.com }
 
 \author[Gaurav Dwivedi ]
{Gaurav Dwivedi}
\address{Gaurav Dwivedi \hfill\break
Department of Mathematics\newline
 Birla Institute of Technology and Science Pilani \newline
  Pilani Campus, Vidya Vihar \newline
 Pilani, Jhunjhunu \newline
 Rajasthan, India - 333031}
 \email{gaurav.dwivedi@pilani.bits-pilani.ac.in}
\subjclass[2020]{35J20; 35J61}
\keywords{ Kirchhoff  type problem; Exponential nonlinearity; Variational methods; Critical growth; Musielak Sobolev spaces}

\begin{abstract}
This paper aims to establish the existence of  a weak solution for the non-local problem:
\begin{equation*}
\left\{\begin{array}{ll}
-a\left(\int_{\Omega}\mathcal{H}(x,|\nabla u|)dx \right) \Delta_{\mathcal{H}}u  &=f(x,u) \ \ \hbox{in} \ \ \Omega,  \ \ \ \\
\hspace{3.3cm} u  &= 0 \ \   \hbox{on}  \ \ \partial \Omega,
\end{array}\right.
\end{equation*}
where $\Omega\subseteq \R^{N},\, N\geq 2$ is a bounded and smooth domain containing two open and connected subsets $\Omega_p$ and  $\Omega_N$ such that $ \overline{\Omega}_{p}\cap\overline{\Omega}_{N}=\emptyset$ and $\Delta_{\mathcal{H}}u=\hbox{div}( h(x,|\nabla u|)\nabla u)$ is the $\mathcal{H}$-Laplace operator.  We assume that $\Delta_{\mathcal{H}}$ reduces to  $ \Delta_{p(x)}$ in $\Omega_{p}$ and to $ \Delta_{N}$ in $\Omega_{N},$  the  non-linear function $f:\Omega\times\R\rightarrow \R$  acts  as $|t|^{p^{\ast}(x)-2}t$ on $\Omega_{p}$ and as $e^{\alpha|t|^{N/(N-1)}}$ on $\Omega_{N}$ for sufficiently large $|t|$. To establish the existence results in a Musielak-Sobolev space, we use a variational technique based on  the mountain pass theorem.
\end{abstract}

\maketitle

\section{Introduction}
\setcounter{equation}{0}
This paper aims to establish the  existence of  a weak solution to the following non-local problem:
\begin{equation}\label{1.1}
\left\{\begin{array}{ll}
-a\left(\int_{\Omega}\mathcal{H}(x,|\nabla u|)dx \right) \Delta_{\mathcal{H}}u  &=f(x,u) \ \ \hbox{in} \ \ \Omega,  \ \ \ \\
\hspace{3.8cm} u  &= 0 \ \   \hbox{on}  \ \ \partial \Omega,
\end{array}\right.
\end{equation}
where $\Omega\subseteq \R^{N}$ is a  bounded and smooth domain,  $N\geq 2$, $\Delta_{\mathcal{H}}u=\hbox{div}( h(x,|\nabla u|)\nabla u),$  $\mathcal{H}(x,t)=\int_{0}^{|t|} h(x,s)s\ ds$ and $ h:\Omega\times [0,\infty)\rightarrow[0,\infty)$ is a generalized $N$-function. 

Problem \eqref{1.1}  is known as  Kirchhoff type problem as it is related to the celebrated work of Kirchhoff \cite{kirchhoff1876vorlesungen}, where the author studied the equation:
\begin{equation}
\rho \frac{\partial^2 u}{\partial t^2}-\left(\frac{\rho_0}{h}+\frac{E}{2L}\int_\Om \left|\frac{\partial u}{\partial x}\right|^2dx\right) \frac{\partial^2 u}{\partial x^2}=0.
\end{equation}
When $\mathcal{H}(x,t)=t^p,$ \eqref{1.1} reduces to a Kirchhoff type problem for $p$-Laplace operator.  Lions \cite{lions1978} set-up an abstract framework for the study of such  problems and thereafter several authors obtained existence results for $p$-Kirchhoff type equations, see \cite{alves2021,alves2001,alves2005,chipot1997some,furtado2019,hamydy2011existence,naiman2017,sun2016,wang2018,radulescu1,yan2021} and references therein. If $\mathcal{H}(x,t)=t^{p(x)},$ \eqref{1.1} transforms into a Kirchhoff type problem with variable exponent and existence results are such problems are studied in the variable exponent Sobolev spaces. For some such results, one can refer to,  \cite{chung1, p_x,dai,dai1,hamdani,lee1} and references therein.  When $\mathcal{H}(x,t)$ is independent of $x,$ the existence results for problems of type \eqref{1.1} are discussed in Orlicz-Sobolev spaces, and we refer to the work of  Chaharlang and Razani \cite{razani}, and Chung \cite{Chung2} in this direction. In the case, when $\mathcal{H}(x,t)$ depends on both $x$ and $t,$ the existence of a solution for the problems of the type \eqref{1.1} is studied in Musielak-Sobolev spaces (for the definitions and properties of variable exponent Sobolev spaces and Musielak-Sobolev spaces, see Section \ref{sec2}). The study of Musielak spaces started in the mid-1970s with the work of Musielak \cite{musielak} and Hudzik \cite{hudzik,hudzik1},  where the authors provide the general framework for Musielak spaces in terms of modular function.  Many authors \cite{subsup,gossez,hachimi,shi} used such spaces to prove the existence of a solution for problems of the type \eqref{1.1} without the Kirchhoff term $a.$ Shi and Wu \cite{shi} studied the existence result for Kirchhoff type problems in Musielak-Sobolev spaces. Chlebicka \cite{pocket} provides an extensive survey of elliptic partial differential equations in Musielak spaces.  Recently, Alves et al. \cite{double} developed the concept of double criticality and studied the quasilinear problem in Musielak-Sobolev spaces, and our existence results are motivated by their work.

Next, we state our hypotheses. Throughout this article, for any  $r\in C(\overline{\Omega},(1,\infty))$, we denote $r^{-}=\min\limits_{x\in \Omega}r(x)$ and $r^{+}=\max\limits_{x\in \Omega}r(x).$ Further, the functions $p,q,p^{*},q_{1}\in C(\overline{\Omega},(1,\infty))$.
We consider the following assumptions on the functions $\mathcal{H}$ and $ h:$
 \begin{itemize}
 \item[$ (\mathcal{H}_{1}) $]   $ h(x,\cdot)\in C^{1}$ in $(0,\infty)$,  $\forall x\in\Omega$.
 \item[$ (\mathcal{H}_{2}) $]  $ h(x,t),\, \partial_{t}( h(x,t)t)>0,$ $\forall x\in\Omega$ and $t>0$.
 \item[$ (\mathcal{H}_{3}) $]  $p^{-}\leq\frac{ h(x,|t|)|t|^{2}}{\mathcal{H}(x,|t|)}\leq q^{+}$ for $x\in \Omega$ and  $t\neq 0$  for some $1<p^{-}\leq p(x)<N<q(x)\leq q^{+}<(p^{*})^{-}$.
\item[$ (\mathcal{H}_{4}) $]  $\inf\limits_{x\in\Omega}\mathcal{H}(x,1)=b_{1}$ for some $b_{1}>0$.
\item[$ (\mathcal{H}_{5}) $]  For each $t_{0}\neq 0$, there exists $d_{0}>0$ such that $\frac{\mathcal{H}(x,t)}{t}\geq d_{0}$ and  $\frac{\widetilde{\mathcal{H}}(x,t)}{t}\geq d_{0}$ for $t\geq t_{0}$ and $x\in\Omega$,  where $\widetilde{\mathcal{H}}(x,t)=\int_{0}^{|t|}\widetilde{ h}(x,s)s\ ds$,  $\widetilde{ h}$ is the complimentary function of $ h$ which is defined as $\widetilde{ h}(x,t)=\sup\{s: h(x,s)s\leq t\} \ \forall(x,t)\in\overline{\Omega}\times [0,\infty).$
\end{itemize}
Let $S\subset\Omega$ and $\delta>0$. The $\delta$ neighborhood of $S$ is denoted by $S_{\delta}$ and defined as 
$$S_{\delta}=\{x\in\Omega:\text{dist}(x,S)<\delta \}.$$

Assume that, we have three smooth domains $ \Omega_{p}, \ \Omega_{N}$ and $\Omega_{q}$ with non-empty interiors  such that
$\Omega=\Omega_{p}\cup\Omega_{N}\cup\Omega_{q}$ and $(\overline{\Omega}_{p})_{\delta}\cap(\overline{\Omega}_{N})_{\delta}=\emptyset$.

 Next, we define continuous functions $ \psi_{p},\ \psi_{N},\ \psi_{q}:\overline{\Omega}\rightarrow[0,1]$ such that
 $$ \psi_{p}(x)=1 \ \forall x\in \overline{\Omega_{p}}, \ \   \psi_{p}(x)=0  \ \forall x\in (\overline{\Omega_{p}})^{c}_{\delta},$$
  $$ \psi_{N}(x)=1 \ \forall x\in \overline{\Omega_{N}}, \ \  \psi_{N}(x)=0  \ \forall x\in (\overline{\Omega_{N}})^{c}_{\delta},$$
  $$ \psi_{q}(x)=1 \ \forall x\in \overline{\Omega_{q}},  \ \   \psi_{q}(x)=0  \ \forall x\in (\overline{\Omega_{q}})^{c}_{\delta}.$$
  
We consider that the non-linear function $f:\Omega\times\R\rightarrow \R$ is continuous and of the following type:
\begin{equation}\tag{$f_{1}$}
f(x,t)=\lambda \psi_{N}(x)|t|^{\beta-2}te^{\alpha|t|^{\frac{N}{N-1}}}+\widetilde{ \psi}_{q}(x)\varphi(x,t)+ \psi_{p}(x)|t|^{p^{\ast}-2}t \ \ \forall (x,t)\in \Omega\times \R,
\end{equation}
where $(p^{*})^{+}\geq p^{*}(x)\geq(p^{*})^{-}>q^{+}\geq q(x)\geq q^{-}>N>p^{+}\geq p(x)\geq p^{-}>N/2$, $\beta>q^{-}$, $\lambda>0$ and $\alpha>0$. Moreover, $\widetilde{ \psi}_{q}:\overline{\Omega}\rightarrow [ 0,1],$
 $\varphi:\overline{\Omega}\times\R\rightarrow\R$ are continuous functions such that
$$\widetilde{ \psi}_{q}(x)=1, \ \forall x\in \Omega_{q} \  \text{and} \ \widetilde{ \psi}_{q}(x)=0,  \ \forall x\in (\overline{\Omega}_{q})^{c}_{\delta/2},$$
and
$\varphi(x,t)=o(|t|^{q_{1}(x)-1}) $ as $t\rightarrow 0$ uniformly on $(\overline{\Omega}_{q})_{\delta/2}$ for some $q_{1}^{+}\geq q_{1}(x)\geq q_{1}^{-}>q^{-},$ and there exists $\chi>q^{-}$ such that 
$$0<\chi \Phi(x,t)\leq \varphi(x,t)t, \ \forall x\in (\overline{\Omega}_{q})_{\delta/2},$$ where $\Phi(x,t)=\int_{0}^{|t|}\varphi(x,s)ds.$\\

Along with the above notations, $h$  also satisfies the following conditions for each $t>0$:
 \begin{itemize}
 \item[$ (\mathcal{H}_{6}) $]   $ h(x,t)\geq t^{N-2} \ \ \forall x\in\Omega_{N}$ and $C_{1} t^{N-2}\geq  h(x,t) \ \forall x\in\Omega_{N}\backslash (\overline{\Omega}_{q})_{\delta} $ for some $C_{1}>0$.
 \item[$ (\mathcal{H}_{7}) $] There exist a  continuous function $ \eta_{1}:\overline{\Omega}\rightarrow \R$ such that $ h(x,t)\geq  \eta_{1}(x)  t^{q(x)-2}$ $\forall x\in(\Omega_{q})_{\delta}$ and  $ \eta_{1}(x)>0, \ \forall x\in(\Omega_{q})_{\delta}$ and $ \eta_{1}(x)=0, \ \forall x\in((\Omega_{q})_{\delta})^{c}.$
 \item[$ (\mathcal{H}_{8}) $]  There exist a non-negative continuous function $ \eta_{2}:\overline{\Omega_{p}}\rightarrow \R$ such that $ \eta_{2}(x)  t^{q(x)-2}+C_{2}t^{p(x)-2}\geq h(x,t)\geq t^{p(x)-2} \  \ \forall x\in\Omega_{p}$ and   $ \eta_{2}(x)>0, \ \forall x\in(\Omega_{q})_{\delta}$ and $ \eta_{2}(x)=0, \ \forall x\in\overline{\Omega_{p}}\backslash(\overline{\Omega_{q})_{\delta}},$  for some $C_{2}>0$.
\end{itemize}

Next, we state our hypotheses on the nonlocal term $a.$ The continuous function $a: \R^{+} \rightarrow\R^{+}$  satisfies following conditions: 
\begin{itemize}
\item[$(a_{1})$]  There exist positive real number $a_{0}$ such that 
$ a(s) \geq a_{0}$  and $a$ is non-decreasing $\forall \ s>0$.
\item[$(a_{2})$]  There exist  $\theta>1$ such that $\beta >N\theta$ and $a(s)/s^{\theta-1}$ is non-increasing for $s>0.$
\end{itemize}
\begin{rem}
By $(a_{2})$, we have
\begin{itemize}
\item[$(a'_{2})$]  
$\theta A(s)-a(s)s$ is non-decreasing, $\forall \ s>0$, where $A(s)=\int_{0}^{s}a(t)dt$.
\end{itemize}
In particular, 
\begin{equation}\label{a1}
\theta A(s)-a(s)s\geq 0 \ \  \forall \ s>0.
\end{equation}
Again by $(a_{2})$ and \eqref{a1}, one gets
\begin{itemize}
\item[$(a''_{2})$]  
$ A(s)\leq s^{\theta}A(1)$  $\forall \ s\geq 1$.
\end{itemize}
\end{rem}

Now, we state main result of this article:
  \begin{theorem}\label{t1}
  Suppose  that the conditions $(f_{1}),$ $(\mathcal{H}_{1})-(\mathcal{H}_{8})$ and $(a_{1})-(a_{2})$ are satisfied. Then there exists $\lambda_{1}>0$ such that  for any $ \lambda\geq\lambda_{1}$,    Problem (\ref{1.1}) has non trivial weak solution  via mountain pass theorem.
  \end{theorem}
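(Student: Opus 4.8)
The plan is to obtain weak solutions of \eqref{1.1} as critical points of the energy functional
\[
I_\lambda(u)=A\!\left(\int_\Omega \mathcal{H}(x,|\nabla u|)\,dx\right)-\int_\Omega F(x,u)\,dx,\qquad F(x,t)=\int_0^t f(x,s)\,ds,
\]
on the Musielak--Sobolev space $W_0^{1,\mathcal{H}}(\Omega)$. First I would check that, under $(\mathcal{H}_{1})$--$(\mathcal{H}_{8})$ and $(f_{1})$, the space $W_0^{1,\mathcal{H}}(\Omega)$ embeds into the variable-exponent Lebesgue space $L^{p^{*}(\cdot)}$ on $\Omega_p$ (via $(\mathcal{H}_{8})$) and into an Orlicz space of exponential type on $\Omega_N$ (via $(\mathcal{H}_{6})$ together with the Trudinger--Moser inequality for $W_0^{1,N}(\Omega_N)$), so that $I_\lambda$ is well defined and of class $C^{1}$ with
\[
I_\lambda'(u)v=a\!\left(\int_\Omega \mathcal{H}(x,|\nabla u|)\,dx\right)\int_\Omega h(x,|\nabla u|)\nabla u\cdot\nabla v\,dx-\int_\Omega f(x,u)v\,dx ;
\]
here the cut-offs $\psi_p,\psi_N,\widetilde{\psi}_q$ keep the three growth regimes disjoint and allow us to treat each one with the tools native to it.

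Next I would verify the mountain pass geometry. The origin is a strict local minimum: using $(a_{1})$ (so $A(s)\ge a_{0}s$ up to a constant), $(\mathcal{H}_{3})$ to bound $\int_\Omega\mathcal H(x,|\nabla u|)\,dx$ below by a power of $\|u\|$, and the behaviour of $f$ near $0$ --- namely $\varphi(x,t)=o(|t|^{q_{1}(x)-1})$ with $q_{1}^{-}>q^{-}>N$, $\beta>q^{-}$ and $(p^{*})^{-}>q^{+}$ --- one gets $I_\lambda(u)\ge\alpha_{0}>0$ on a small sphere $\|u\|=\rho$. For the second condition I would fix a nonzero $w$ supported in $\Omega_p$ (and, separately, one supported in $\Omega_N$) and use $(a''_{2})$ together with the superlinearity $(p^{*})^{-}>q^{+}$ (resp. $\beta>N\theta$ and $\chi>q^{-}$) to see $I_\lambda(tw)\to-\infty$ as $t\to\infty$, producing $e$ with $I_\lambda(e)<0$. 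Let $c_\lambda$ be the resulting mountain pass level.

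The crux is the compactness analysis. I would show that $(PS)_c$ holds for every $c$ below a threshold $c^{*}$ determined jointly by the best constant of the critical $L^{p^{*}(\cdot)}$ embedding on $\Omega_p$ and by the Moser constant $\alpha_N$ on $\Omega_N$. Boundedness of a $(PS)_c$ sequence $(u_n)$ follows by testing with $u_n$ and combining the Ambrosetti--Rabinowitz-type inequalities $0<\chi\Phi(x,t)\le\varphi(x,t)t$ on $(\overline{\Omega}_{q})_{\delta/2}$, the analogous control of $F$ by $tf$ on $\Omega_N\cup\Omega_p$, and the concavity consequence $(a'_{2})$, i.e. $\theta A(s)-a(s)s\ge0$ --- the hypothesis $\beta>N\theta$ being exactly what makes the relevant multiplier admissible so that the leading terms are absorbed. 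One then extracts a weak limit $u$ and a number $t_{0}:=\lim_n\int_\Omega\mathcal H(x,|\nabla u_n|)\,dx\ge0$, runs a Lions-type concentration--compactness lemma for the variable-exponent critical term on $\Omega_p$ (loss of mass only at countably many points, with weights bounded below by a fixed positive constant) and a Trudinger--Moser/Lions alternative for $\int_{\Omega_N}e^{\alpha|u_n|^{N/(N-1)}}$, and uses $c<c^{*}$ to exclude every concentration point, so that $u_n\to u$ strongly.

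Finally I would show $c_\lambda<c^{*}$ for all large $\lambda$: inserting Moser-type concentrating functions (resp. Sobolev-extremal test functions) into $t\mapsto I_\lambda(tw)$ and using that $\lambda$ multiplies a positive superlinear term, one gets $\max_{t\ge0}I_\lambda(tw)\to0$ as $\lambda\to\infty$, hence there is $\lambda_{1}>0$ with $c_\lambda<c^{*}$ whenever $\lambda\ge\lambda_{1}$. The mountain pass theorem then produces a critical point $u_\lambda$ at level $c_\lambda>0$, which is a nontrivial weak solution of \eqref{1.1}. I expect the main obstacle to be this compactness step: making the two genuinely different concentration mechanisms --- variable-exponent Sobolev concentration on $\Omega_p$ and exponential (Trudinger--Moser) concentration on $\Omega_N$ --- coexist inside a single Musielak--Sobolev norm while the nonlocal coefficient $a\!\left(\int_\Omega\mathcal H(x,|\nabla u_n|)\,dx\right)$ is only controlled along a subsequence, and in particular verifying that $t_{0}>0$ so that the limit problem is non-degenerate.
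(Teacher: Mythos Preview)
Your outline is essentially the paper's own strategy: verify mountain pass geometry for $J$ (your $I_\lambda$), prove boundedness of $(PS)_{c_M}$ sequences, establish the $(PS)_{c_M}$ condition via a variable-exponent concentration--compactness argument on $\Omega_p$ together with Trudinger--Moser control on $\Omega_N$, and then force $c_M$ below the compactness threshold by taking $\lambda$ large. So the architecture is right.

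Two points where your plan diverges from, and is heavier than, the paper's proof are worth noting. First, for the level estimate $c_\lambda<c^{*}$ the paper does \emph{not} use Moser-type concentrating profiles or Sobolev extremals; it simply fixes any $\nu_{0}\in C_c^\infty(\Omega_N\setminus\overline{(\Omega_q)_\delta})$, uses $(\mathcal{H}_6)$ and $(a_2'')$ to bound $J(t\nu_0)\le C_1 t^{N\theta}-\frac{\lambda t^{\beta}}{\beta}\|\nu_0\|_{L^\beta}^\beta$ (here is where $\beta>N\theta$ is actually used), and then reads off that $\max_{t}J(t\nu_0)\le C\lambda^{-N/(\beta-N)}\to 0$ as $\lambda\to\infty$. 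So the role of $\lambda$ is precisely to avoid any delicate Talenti-type computation; your proposed route would work but is unnecessary. Second, your attribution of $\beta>N\theta$ to the boundedness step is off: in the paper the boundedness of $(PS)$ sequences comes from $(a_1)$ and the global Ambrosetti--Rabinowitz inequality $0<\psi F(x,t)\le f(x,t)t$ with $\psi=\min\{\chi,\beta,(p^{*})^{-}\}$, not from $(a_2')$; the hypothesis $\beta>N\theta$ is used only for the second mountain pass condition $J(t\nu_0)\to-\infty$.

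Finally, your worry that one must check $t_0=\lim_n\int_\Omega\mathcal H(x,|\nabla u_n|)\,dx>0$ to avoid a ``degenerate limit problem'' is not an obstacle here: by $(a_1)$ one has $a(s)\ge a_0>0$ for all $s\ge 0$, so the nonlocal coefficient never degenerates and the paper simply divides by $a(m(u_n))\ge a_0$ before invoking the $S_+$-type Lemma~\ref{l8}.
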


 This article is organized as follows:  We discuss the definition and properties of Musielak-Sobolev spaces and the functional setup needed to prove our result in Section \ref{sec2}. Section 3 deals with the proof of Theorem \ref{t1}.
 
\section{Preliminaries}\label{sec2}

In this section, we discuss Musielak spaces and their properties. We also provide the functional setup needed to prove our main result and discuss  some helping results. Define, 
\[\mathcal{H}(x,t)=\int_{0}^{|t|} h(x,s)s\ ds,\] where $ h:\Omega\times [0,\infty)\rightarrow[0,\infty).$ Then $\mathcal{H}(x,t)$ is a generalized $N$-function. Recall that, $\mathcal{H}(x,t):\Omega\times [0,\infty)\rightarrow[0,\infty)$ is said to be a generalized $N$-function if it is  continuous, even, convex function of $t$, $\mathcal{H}(x,t)=0$ iff $t=0$,  $\lim\limits_{t\rightarrow 0}\frac{\mathcal{H}(x,t)}{t}=0$ and $\lim\limits_{t\rightarrow \infty}\frac{\mathcal{H}(x,t)}{t}=\infty$.

The Musielak space $L^{\mathcal{H}}(\Omega)$ is defined as:
$$L^{\mathcal{H}}(\Omega)=\left\lbrace u:\Omega\rightarrow\R \ \text{is measurable function}\left| \  \int_{\Omega}\mathcal{H}\left( x,\tau|u|\right) dx<\infty, \  \text{for some} \  \tau>0\right\rbrace \right..$$  $L^{\mathcal{H}}(\Omega)$ is a reflexive Banach space \cite{musielak}  with the Luxemburg norm
$$\| u\|_{L^{\mathcal{H}}(\Omega)}=\inf\left\lbrace \tau>0\left| \ \int_{\Omega}\mathcal{H}\left( x,\frac{|u|}{\tau}\right) dx\leq 1\right\rbrace \right. \cdot$$

We say that a generalised $N$-function satisfies the weak $\Delta_{2}$-condition if there exist $C>0$ and a non-negative function $k\in L^{1}(\Omega)$ such that 
$$\mathcal{H}(x,2t)\leq C\mathcal{H}(x,t)+k(x) \ \ \forall (x\times t)\in\Omega\times\R.$$ If $k=0,$ then $\mathcal{H}$ is said to satisfy  $\Delta_{2}$-condition. Also, the function $\mathcal{H}$ and its complementary function $\widetilde{\mathcal{H}}$( defined in $ (\mathcal{H}_{5}) $)   satisfy the following  Young's inequality \cite[Proposition 2.1]{liu2015}: 
$$s_{1}s_{2}\leq\mathcal{H}(x,s_{1})+\widetilde{\mathcal{H}}(x,s_{2}) \ \forall x\in \Omega, s_{1},s_{2}>0.$$
Further, proceeding as \cite[Lemma A2]{fukagai2006}, we have 
\begin{equation}\label{comp}
\widetilde{\mathcal{H}}(x, h(x,s)s)\leq\mathcal{H}(x,2s), \ \ \forall\,(x,s)\in\overline{\Omega}\times [0,\infty).
\end{equation}

 The Musielak-Sobolev space $W^{1,\mathcal{H}}(\Omega)$ is defined as
 $$W^{1,\mathcal{H}}(\Omega)=\{u\in L^{\mathcal{H}}(\Omega)\left| \  |\nabla u|\in L^{\mathcal{H}}(\Omega)\right.\}.$$
  $W^{1,\mathcal{H}}(\Omega)$  is  a  Banach space with the  norm \cite[Theorem 10.2]{musielak} 
 $$\| u\|_{1,\mathcal{H}}=\| u\|_{L^{\mathcal{H}}(\Omega)}+\| \nabla u\|_{L^{\mathcal{H}}(\Omega)}.$$ 
The space $W^{1,\mathcal{H}}_{0}(\Omega)$ is defined as the closure of $C_{c}^{\infty}(\Omega)$ in $W^{1,\mathcal{H}}(\Omega).$  Also, the space $W^{1,\mathcal{H}}_{0}(\Omega)$ is  equipped with the norm  $\| u\|=\| \nabla u\|_{L^{\mathcal{H}}(\Omega)},$ which is equivalent to the norm $\| \cdot\|_{1,\mathcal{H}}$ \cite[Lemma 5.7]{gossez}.  
\begin{theorem}\cite{musielak}
The spaces $L^{\mathcal{H}}(\Omega)$ and $W^{1,\mathcal{H}}_{0}(\Omega)$ are  reflexive and separable Banach spaces.
\end{theorem}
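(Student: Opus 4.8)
The plan is to identify this statement as a standard structural fact about Musielak--Orlicz spaces and to reduce it to the abstract theory developed by Musielak \cite{musielak}, after verifying that our generalized $N$-function $\mathcal{H}$ satisfies the hypotheses required there. First I would check the basic qualitative properties of $\mathcal{H}$: by construction $\mathcal{H}(x,t)=\int_{0}^{|t|}h(x,s)s\,ds$ is, for each fixed $x$, a continuous, even, convex function of $t$ that vanishes only at $t=0$ and satisfies $\mathcal{H}(x,t)/t\to 0$ as $t\to 0$ and $\mathcal{H}(x,t)/t\to\infty$ as $t\to\infty$; measurability of $x\mapsto\mathcal{H}(x,t)$ follows from continuity of $h$ in $x$ (assumed via $(\mathcal{H}_1)$--$(\mathcal{H}_2)$ and the continuity hypotheses on $\Omega$). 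Thus $\mathcal{H}$ is a genuine generalized $N$-function (a Musielak--Orlicz function) on $\Omega$, which is exactly the setting in which the Musielak space $L^{\mathcal{H}}(\Omega)$ with the Luxemburg norm is defined and shown to be a Banach space in \cite{musielak}.

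Next I would establish separability and reflexivity of $L^{\mathcal{H}}(\Omega)$. Separability follows from the $\Delta_2$-type growth together with separability of $\Omega$ with Lebesgue measure: simple functions with rational coefficients over a countable generating algebra of measurable sets are dense in $L^{\mathcal{H}}(\Omega)$ when $\mathcal{H}$ satisfies a $\Delta_2$-condition, which is guaranteed here because $(\mathcal{H}_3)$ forces the doubling bound $\mathcal{H}(x,2t)\le 2^{q^{+}}\mathcal{H}(x,t)$ (integrating the inequality $h(x,s)s\le q^{+}\mathcal{H}(x,s)/s$, or equivalently using that $t\mapsto\mathcal{H}(x,t)/t^{p^-}$ and $t\mapsto\mathcal{H}(x,t)/t^{q^+}$ are monotone). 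Reflexivity follows from the theorem in \cite{musielak} stating that $L^{\mathcal{H}}(\Omega)$ is reflexive whenever both $\mathcal{H}$ and its complementary function $\widetilde{\mathcal{H}}$ satisfy the $\Delta_2$-condition; the $\Delta_2$-property of $\widetilde{\mathcal{H}}$ is equivalent to $\mathcal{H}$ satisfying the $\nabla_2$-condition, which again is a consequence of $(\mathcal{H}_3)$, since $p^->1$ gives a lower power-type bound $\mathcal{H}(x,t)\ge c\,t^{p^-}$ for $t\ge 1$ and the corresponding upper power bound. Alternatively, one invokes \eqref{comp} and the Young-type inequality quoted above to control $\widetilde{\mathcal{H}}$ directly.

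Finally I would transfer these properties to $W^{1,\mathcal{H}}_{0}(\Omega)$. The map $u\mapsto(u,\nabla u)$ embeds $W^{1,\mathcal{H}}(\Omega)$ isometrically (for the norm $\|u\|_{1,\mathcal{H}}$) onto a closed subspace of the product $L^{\mathcal{H}}(\Omega)\times\big(L^{\mathcal{H}}(\Omega)\big)^{N}$; a finite product of reflexive separable Banach spaces is reflexive and separable, closed subspaces of reflexive spaces are reflexive, and subspaces of separable metric spaces are separable, so $W^{1,\mathcal{H}}(\Omega)$ inherits both properties. Since $W^{1,\mathcal{H}}_{0}(\Omega)$ is by definition a closed subspace of $W^{1,\mathcal{H}}(\Omega)$, it is likewise reflexive and separable; the equivalence of $\|\cdot\|$ and $\|\cdot\|_{1,\mathcal{H}}$ on $W^{1,\mathcal{H}}_{0}(\Omega)$ noted above (from \cite{gossez}) shows these conclusions are independent of which of the two norms is used. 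The only real point requiring care, and thus the main (mild) obstacle, is verifying the $\Delta_2$-condition for both $\mathcal{H}$ and $\widetilde{\mathcal{H}}$ from $(\mathcal{H}_3)$; once that is in hand, every other assertion is a direct citation of \cite{musielak} or an elementary functional-analytic fact.
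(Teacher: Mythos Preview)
The paper does not actually prove this theorem; it is stated with a bare citation to \cite{musielak} and no argument is given. Your proposal correctly reconstructs the standard proof that the citation is pointing to: verify from $(\mathcal{H}_3)$ that both $\mathcal{H}$ and $\widetilde{\mathcal{H}}$ satisfy the $\Delta_2$-condition, invoke Musielak's abstract results to conclude that $L^{\mathcal{H}}(\Omega)$ is a reflexive separable Banach space, and then pass to $W^{1,\mathcal{H}}_0(\Omega)$ via the isometric embedding $u\mapsto(u,\nabla u)$ into a finite product of copies of $L^{\mathcal{H}}(\Omega)$ together with the closed-subspace argument. This is exactly the route taken in \cite{musielak} (and in the related references \cite{subsup,liu2015} the paper also cites), so your approach is consistent with what the paper intends; you have simply supplied the details that the paper omits.
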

In particular, if we take $\mathcal{H}(x,t)=t^{p(x)}$ then we denote  $L^{\mathcal{H}}(\Omega)$ as $L^{p(x)}(\Omega)$ and $W^{1,\mathcal{H}}(\Omega)$ as $W^{1,p(x)}(\Omega)$ . Such spaces are called variable exponent Lebesgue and variable exponent Sobolev spaces, respectively. To know more about these spaces, one can check \cite{cruz2013variable,fan, radulescu}. 

Further, we have the following embedding result:
\begin{prop}\label{prop3}\cite{fan}
Let $\Omega$ be a bounded smooth domain. Then the following embeddings are continuous:
\begin{enumerate}
\item[$(a)$]   $W^{1,\mathcal{H}}_{0}(\Omega)\hookrightarrow L^{\gamma}(\Omega_{N}), \ 1\leq\gamma <\infty,$

\item[$(b)$]   $W^{1,\mathcal{H}}_{0}(\Omega)\hookrightarrow L^{s(x)}((\Omega_{p})_{\delta}),$ where $s(x)\leq\frac{Np(x)}{N-p(x)}$.
\item[$(c)$]    $W^{1,\mathcal{H}}_{0}(\Omega)\hookrightarrow W^{1,q^{-}}_{0}((\Omega_{q})_{\delta}),$
\end{enumerate}
Moreover,  the embedding
\begin{equation}\label{embed}
W^{1,\mathcal{H}}_{0}(\Omega)\hookrightarrow C(\overline{(\Omega_{q})_{\delta}}) \hbox{ is compact }. 
\end{equation}
\end{prop}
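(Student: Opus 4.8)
To prove Proposition~\ref{prop3} the plan is, for each of the four embeddings, to read off from the structural hypotheses a pointwise lower bound of the form $\mathcal{H}(x,t)\ge c\,\mathcal{G}(x,t)$ on the relevant subdomain $U$, with $\mathcal{G}$ one of the model $N$-functions $|t|^{N}$, $|t|^{p(x)}$, $|t|^{q(x)}$; to deduce that the restriction $u\mapsto u|_{U}$ maps $W^{1,\mathcal{H}}_{0}(\Omega)$ boundedly into the associated Sobolev space on $U$; and then to compose with the classical or variable-exponent Sobolev embedding on $U$. The preliminary ingredient is the pair of elementary estimates from $(\mathcal{H}_{3})$ and $(\mathcal{H}_{4})$: integrating $p^{-}\le h(x,|t|)|t|^{2}/\mathcal{H}(x,|t|)\le q^{+}$ in $t$ gives $\mathcal{H}(x,\lambda s)\le\lambda^{q^{+}}\mathcal{H}(x,s)$ for $\lambda\ge1$ and $\mathcal{H}(x,\lambda s)\le\lambda^{p^{-}}\mathcal{H}(x,s)$ for $0<\lambda\le1$, so that $\mathcal{H}$ satisfies the $\Delta_{2}$-condition, $\mathcal{H}(x,t)\ge b_{1}|t|^{p^{-}}$ for $|t|\ge1$, and the modular $\rho_{\mathcal{H}}(v):=\int_{\Omega}\mathcal{H}(x,|v|)\,dx$ obeys $\rho_{\mathcal{H}}(v)\le\max\{\|v\|_{L^{\mathcal{H}}(\Omega)}^{p^{-}},\|v\|_{L^{\mathcal{H}}(\Omega)}^{q^{+}}\}$. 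The mechanism used throughout is then: if $\mathcal{G}(x,t)\le C\,\mathcal{H}(x,t)$ on $U$ for a generalized $N$-function $\mathcal{G}$, then $\rho_{\mathcal{G},U}(u)\le C\rho_{\mathcal{H}}(u)$ and $\rho_{\mathcal{G},U}(\nabla u)\le C\rho_{\mathcal{H}}(\nabla u)$, whence $u\mapsto u|_{U}$ is a bounded linear map from $W^{1,\mathcal{H}}_{0}(\Omega)$ into $W^{1,\mathcal{G}}(U)$.

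For $(a)$, $(\mathcal{H}_{6})$ gives $h(x,s)\ge s^{N-2}$ on $\Omega_{N}$, hence $\mathcal{H}(x,t)=\int_{0}^{|t|}h(x,s)s\,ds\ge|t|^{N}/N$ on $\Omega_{N}$; the mechanism makes restriction a bounded map $W^{1,\mathcal{H}}_{0}(\Omega)\to W^{1,N}(\Omega_{N})$, and since $\Omega_{N}$ is bounded and smooth the borderline Sobolev (Trudinger) embedding $W^{1,N}(\Omega_{N})\hookrightarrow L^{\gamma}(\Omega_{N})$, $1\le\gamma<\infty$, completes $(a)$ by composition. For $(b)$, $(\mathcal{H}_{8})$ gives $\mathcal{H}(x,t)\ge|t|^{p(x)}/p^{+}$ on $\Omega_{p}$, controlling $\int_{\Omega_{p}}(|u|^{p(x)}+|\nabla u|^{p(x)})\,dx$; on $(\Omega_{p})_{\delta}\setminus\Omega_{p}$ I use that $\Omega=\Omega_{p}\cup\Omega_{N}\cup\Omega_{q}$ and $(\overline{\Omega}_{p})_{\delta}\cap(\overline{\Omega}_{N})_{\delta}=\emptyset$ force this set into $\Omega_{q}$, with closure contained in $\overline{\Omega}_{q}$, which has distance at least $\delta$ from $((\Omega_{q})_{\delta})^{c}$; hence $\eta_{1}$ is bounded below by a positive constant there and $(\mathcal{H}_{7})$ gives $\mathcal{H}(x,t)\ge c\,|t|^{q(x)}/q^{+}$ with $q(x)>N>p(x)$, so that $\int_{(\Omega_{p})_{\delta}\cap\Omega_{q}}(|u|^{p(x)}+|\nabla u|^{p(x)})\,dx<\infty$ as well. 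Thus restriction is bounded from $W^{1,\mathcal{H}}_{0}(\Omega)$ into $W^{1,p(\cdot)}((\Omega_{p})_{\delta})$, and the variable-exponent Sobolev embedding (see \cite{fan,radulescu}) $W^{1,p(\cdot)}((\Omega_{p})_{\delta})\hookrightarrow L^{s(\cdot)}((\Omega_{p})_{\delta})$ for $s(x)\le Np(x)/(N-p(x))$ gives $(b)$.

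For $(c)$ and the compactness \eqref{embed}: $(\mathcal{H}_{7})$ gives $\mathcal{H}(x,t)\ge\eta_{1}(x)|t|^{q(x)}/q^{+}$ on $(\Omega_{q})_{\delta}$ with $\eta_{1}>0$ there; since $q(x)\ge q^{-}$, the comparison mechanism applied on the pieces of $(\Omega_{q})_{\delta}$ on which $\eta_{1}$ stays bounded below, together with the density of $C_{c}^{\infty}(\Omega)$, shows that restriction is bounded from $W^{1,\mathcal{H}}_{0}(\Omega)$ into $W^{1,q^{-}}_{0}((\Omega_{q})_{\delta})$, which is $(c)$. Since $q^{-}>N$, Morrey's inequality gives a continuous embedding $W^{1,q^{-}}_{0}((\Omega_{q})_{\delta})\hookrightarrow C^{0,1-N/q^{-}}(\overline{(\Omega_{q})_{\delta}})$, and $C^{0,1-N/q^{-}}(\overline{(\Omega_{q})_{\delta}})\hookrightarrow C(\overline{(\Omega_{q})_{\delta}})$ is compact by Arzel\`a--Ascoli; composing with $(c)$ yields \eqref{embed}.

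The Sobolev, Trudinger, and Morrey inputs, as well as the modular--norm comparisons, are standard once the correct subdomain and target exponent are in hand (one also uses that, for $\delta$ small, the neighborhoods $(\Omega_{p})_{\delta}$ and $(\Omega_{q})_{\delta}$ of the smooth domains $\Omega_{p},\Omega_{q}$ are Lipschitz, so the extension-based embeddings apply). The point that needs the most care, and the main potential obstacle, is the geometric bookkeeping: one must verify that on every overlap of the three regions (including the parts of $(\Omega_{p})_{\delta}$ and $(\Omega_{q})_{\delta}$ lying outside $\Omega_{p}$, $\Omega_{q}$) at least one of $(\mathcal{H}_{6})$--$(\mathcal{H}_{8})$ delivers a lower bound for $\mathcal{H}$ with a strictly positive coefficient and with an exponent large enough to reach the target space, controlling the possible degeneracy of the weights $\eta_{1},\eta_{2}$ near the outer boundaries of these neighborhoods.
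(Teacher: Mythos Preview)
Your proposal is correct and follows essentially the same route as the paper: derive pointwise lower bounds $\mathcal{H}(x,t)\ge c\,|t|^{r(x)}$ on each subdomain from $(\mathcal{H}_{6})$--$(\mathcal{H}_{8})$, use these to factor the restriction map through the classical Sobolev spaces $W^{1,N}(\Omega_{N})$, $W^{1,p(\cdot)}((\Omega_{p})_{\delta})$, $W^{1,q(\cdot)}((\Omega_{q})_{\delta})$, and then invoke the standard Sobolev/Trudinger/Morrey embeddings. The paper's own proof is terser---it simply asserts the three intermediate embeddings and quotes the classical results---while you spell out the modular-comparison mechanism and, in particular, handle the overlap $(\Omega_{p})_{\delta}\setminus\Omega_{p}$ explicitly (the paper does not isolate this step); your honest flagging of the possible degeneracy of $\eta_{1}$ near $\partial(\Omega_{q})_{\delta}$ is a point the paper also leaves implicit.
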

\begin{proof}
By using the conditions $(\mathcal{H}_{6}),(\mathcal{H}_{7}),(\mathcal{H}_{8})$ and the definition of $W^{1,\mathcal{H}}_{0}(\Omega)$,  we have continuous embeddings 
 $$W^{1,\mathcal{H}}_{0}(\Omega)\hookrightarrow W^{1,N}_{0}(\Omega_{N})$$
$$W^{1,\mathcal{H}}_{0}(\Omega)\hookrightarrow W^{1,q(x)}_{0}((\Omega_{q})_{\delta}),$$
$$W^{1,\mathcal{H}}_{0}(\Omega)\hookrightarrow W^{1,p(x)}_{0}((\Omega_{p})_{\delta}).$$
  Further,  $W^{1,N}_{0}(\Omega_{N})\hookrightarrow L^{\gamma}(\Omega_{N})$ is continuous for any $1\leq\gamma <\infty$ \cite[Theorem 2.4.4]{kesawan}, which proves $(a)$. 

We know that $W^{1,p(x)}_{0}((\Omega_{p})_{\delta})\hookrightarrow L^{s(x)}((\Omega_{p})_{\delta})$ is continuous for $s(x)\leq\frac{Np(x)}{N-p(x)}$ \cite[Theorem 2.3]{fan}. This proves $(b).$

 For  $(c)$, as $q^{-}\leq q(x)$,  $W^{1,q(x)}_{0}((\Omega_{q})_{\delta})\hookrightarrow W^{1,q^{-}}_{0}((\Omega_{q})_{\delta})$ is continuous. 
 Moreover, since  $q^{-}>N,$  $W^{1,q^{-}}_{0}((\Omega_{q})_{\delta})\hookrightarrow C(\overline{(\Omega_{q})_{\delta}})$is compact\cite[Theorem 2.5.3]{kesawan}  and  this implies that $W^{1,\mathcal{H}}_{0}(\Omega)\hookrightarrow C(\overline{(\Omega_{q})_{\delta}})$ is compact. 
  \end{proof}

Next, we will state some results which are used to prove our main result.
\begin{prop}\label{prop4}\cite[Proposition 1.5]{subsup}
Let $\mathcal{H}$ be a generalized $N$-function. If $ (\mathcal{H}_{4}) $ holds then  $L^{\mathcal{H}}(\Omega)\hookrightarrow L^{1}(\Omega)$ and $W^{1,\mathcal{H}}(\Omega)\hookrightarrow W^{1,1}(\Omega)$.
\end{prop}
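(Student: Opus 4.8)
The plan is to distill from $(\mathcal{H}_{4})$ and the convexity of $\mathcal{H}(x,\cdot)$ a linear pointwise lower bound for $\mathcal{H}$ on large arguments, and then to transfer this bound to the level of norms via the unit-ball property of the Luxemburg norm.

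First I would record the pointwise estimate. Since $\mathcal{H}(x,\cdot)$ is convex with $\mathcal{H}(x,0)=0$, for every $t\ge 1$ and every $x\in\Omega$ the decomposition $1=\tfrac1t\cdot t+(1-\tfrac1t)\cdot 0$ together with convexity gives $\mathcal{H}(x,1)\le \tfrac1t\,\mathcal{H}(x,t)$, so that
\[
\mathcal{H}(x,t)\ \ge\ t\,\mathcal{H}(x,1)\ \ge\ b_{1}\,t,\qquad \forall\,x\in\Omega,\ \forall\,t\ge 1,
\]
using $(\mathcal{H}_{4})$ in the last inequality. Consequently, for any measurable $w:\Omega\to\R$, splitting $\Omega=\{|w|\ge 1\}\cup\{|w|<1\}$ and using $\mathcal{H}\ge 0$ yields
\[
\int_{\Omega}|w|\,dx\ \le\ \frac{1}{b_{1}}\int_{\Omega}\mathcal{H}(x,|w|)\,dx\ +\ |\Omega|.
\]

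Next I would apply this with $w=u/\rho$, where $\rho:=\|u\|_{L^{\mathcal{H}}(\Omega)}$ for a given $u\in L^{\mathcal{H}}(\Omega)$ (the case $\rho=0$ being trivial, as then $u=0$ a.e.). Here the key input is the unit-ball property $\int_{\Omega}\mathcal{H}\big(x,|u|/\rho\big)\,dx\le 1$, which one obtains from the definition of the Luxemburg norm as an infimum together with Fatou's lemma and the continuity of $\mathcal{H}(x,\cdot)$. Substituting, we get $\int_{\Omega}|u|/\rho\,dx\le \tfrac1{b_{1}}+|\Omega|$, that is,
\[
\|u\|_{L^{1}(\Omega)}\ \le\ \Big(\tfrac{1}{b_{1}}+|\Omega|\Big)\,\|u\|_{L^{\mathcal{H}}(\Omega)},
\]
which is precisely the continuity of the embedding $L^{\mathcal{H}}(\Omega)\hookrightarrow L^{1}(\Omega)$.

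Finally, the embedding $W^{1,\mathcal{H}}(\Omega)\hookrightarrow W^{1,1}(\Omega)$ follows at once: if $u\in W^{1,\mathcal{H}}(\Omega)$ then $u\in L^{\mathcal{H}}(\Omega)$ and $|\nabla u|\in L^{\mathcal{H}}(\Omega)$, so by the first part $u$ and each weak partial derivative of $u$ belong to $L^{1}(\Omega)$, whence $u\in W^{1,1}(\Omega)$; adding the two estimates gives $\|u\|_{W^{1,1}(\Omega)}\le(\tfrac1{b_{1}}+|\Omega|)\,\|u\|_{1,\mathcal{H}}$. The only step requiring genuine care is the justification of the unit-ball inequality $\int_{\Omega}\mathcal{H}(x,|u|/\rho)\,dx\le 1$; the remainder of the argument is convexity bookkeeping and the finiteness of $|\Omega|$.
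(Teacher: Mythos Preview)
The paper does not actually prove this proposition; it merely quotes it from \cite[Proposition~1.5]{subsup} without argument, so there is nothing in the paper to compare your proof against. Your argument is correct and is essentially the standard one: convexity of $\mathcal{H}(x,\cdot)$ with $\mathcal{H}(x,0)=0$ gives $\mathcal{H}(x,t)\ge t\,\mathcal{H}(x,1)\ge b_{1}t$ for $t\ge1$, the split $\{|w|\ge1\}\cup\{|w|<1\}$ together with $|\Omega|<\infty$ yields the modular bound, and the unit-ball property of the Luxemburg norm (justified, as you note, via Fatou) converts this into the norm inequality. The passage to $W^{1,\mathcal{H}}\hookrightarrow W^{1,1}$ is then immediate.
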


\begin{prop}\label{prop5}\cite[Theorem 2.1]{kovacik}
Let $r\in C(\overline{\Omega},(1,\infty))$ and $s\in C(\overline{\Omega},(1,\infty))$ be the conjugate exponent of $r$. Then, for any $u\in L^{r(x)}(\Omega)$ and $v\in L^{s(x)}(\Omega)$, we have
$$\left| \int_{\Omega}uv\ dx\right|\leq\left(\frac{1}{r^{-}}+\frac{1}{s^{-}}\right) \|u\|_{L^{r(x)}(\Omega)}\|v\|_{L^{s(x)}(\Omega)}.$$
\end{prop}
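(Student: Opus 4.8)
\textbf{Proof proposal for Proposition \ref{prop5}.} The plan is to derive the inequality from the pointwise Young inequality together with the elementary relation between the Luxemburg norm and the modular $\rho_{r}(w)=\int_{\Omega}|w|^{r(x)}\,dx$. First, since $\overline{\Omega}$ is compact and $r,s$ are continuous with values in $(1,\infty)$, we have $1<r^{-}\le r^{+}<\infty$ and $1<s^{-}\le s^{+}<\infty$, and by hypothesis $\tfrac{1}{r(x)}+\tfrac{1}{s(x)}=1$ for every $x\in\overline{\Omega}$. If $u=0$ a.e.\ or $v=0$ a.e., the inequality is trivial, so assume $\|u\|_{L^{r(x)}(\Omega)}>0$ and $\|v\|_{L^{s(x)}(\Omega)}>0$; since the left-hand side is positively homogeneous of degree one separately in $u$ and in $v$, and the right-hand side is a constant times $\|u\|_{L^{r(x)}(\Omega)}\|v\|_{L^{s(x)}(\Omega)}$, it suffices to prove the estimate under the normalization $\|u\|_{L^{r(x)}(\Omega)}=\|v\|_{L^{s(x)}(\Omega)}=1$ and then multiply through by the two norms.

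Second, I would record the norm--modular estimate: if $w\in L^{t(x)}(\Omega)$ with $\|w\|_{L^{t(x)}(\Omega)}\le 1$, then $\int_{\Omega}|w(x)|^{t(x)}\,dx\le 1$. Indeed, if $\|w\|_{L^{t(x)}(\Omega)}<1$ one may pick $\tau\in(\|w\|_{L^{t(x)}(\Omega)},1)$ with $\int_{\Omega}(|w|/\tau)^{t(x)}\,dx\le 1$, and since $\tau<1$ and $t(x)\ge 1$ we have $|w|^{t(x)}\le(|w|/\tau)^{t(x)}$ pointwise, hence $\int_{\Omega}|w|^{t(x)}\,dx\le 1$; if $\|w\|_{L^{t(x)}(\Omega)}=1$, then $\int_{\Omega}(|w|/\tau)^{t(x)}\,dx\le 1$ for every $\tau>1$, and letting $\tau\downarrow 1$ and applying the monotone convergence theorem gives $\int_{\Omega}|w|^{t(x)}\,dx\le 1$. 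Applied with $t=r,\ w=u$ and with $t=s,\ w=v$, this yields $\rho_{r}(u)\le 1$ and $\rho_{s}(v)\le 1$ under the normalization of the previous step.

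Third, I would invoke the pointwise Young inequality: for $a,b\ge 0$ and conjugate exponents $r(x),s(x)$,
\[
ab\le\frac{a^{r(x)}}{r(x)}+\frac{b^{s(x)}}{s(x)}.
\]
Taking $a=|u(x)|$, $b=|v(x)|$ and using $\tfrac{1}{r(x)}\le\tfrac{1}{r^{-}}$, $\tfrac{1}{s(x)}\le\tfrac{1}{s^{-}}$, we get
\[
|u(x)v(x)|\le\frac{1}{r^{-}}\,|u(x)|^{r(x)}+\frac{1}{s^{-}}\,|v(x)|^{s(x)}\qquad\text{for a.e. }x\in\Omega.
\]
Integrating over $\Omega$ and using $\rho_{r}(u)\le 1$ and $\rho_{s}(v)\le 1$ gives $\int_{\Omega}|uv|\,dx\le\tfrac{1}{r^{-}}+\tfrac{1}{s^{-}}$, which is the asserted bound in the normalized case. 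Undoing the normalization of Step~1 (multiplying both sides by $\|u\|_{L^{r(x)}(\Omega)}\|v\|_{L^{s(x)}(\Omega)}$) and bounding $\left|\int_{\Omega}uv\,dx\right|\le\int_{\Omega}|uv|\,dx$ completes the proof.

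As for difficulty, there is no genuine obstacle here: the only point requiring a little care is the norm--modular comparison of Step~2, namely the implication $\|w\|_{L^{t(x)}(\Omega)}\le 1\Rightarrow\rho_{t}(w)\le 1$, which is exactly where the definition of the Luxemburg norm enters. The remainder is the classical Young/Hölder computation performed pointwise in $x$, the only new feature being that the conjugacy $\tfrac1{r(x)}+\tfrac1{s(x)}=1$ holds at every point, so that Young's inequality may be applied with the variable exponents $r(x),s(x)$ at each $x$.
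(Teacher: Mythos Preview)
Your argument is correct and is essentially the standard proof of the generalized H\"older inequality in variable exponent Lebesgue spaces. Note, however, that the paper does not supply its own proof of this proposition: it is merely quoted from \cite[Theorem 2.1]{kovacik}, so there is no in-paper proof to compare against; the argument you wrote is precisely the one given in that reference (normalize, use the unit-ball inequality $\|w\|_{L^{t(x)}}\le 1\Rightarrow\rho_t(w)\le 1$, apply Young's inequality pointwise with the variable exponents, and integrate).
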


\begin{prop}\label{prop2}\cite{fan}
For any  $u\in L^{p(x)}(\Omega),$ the followings are true:
\begin{enumerate}
\item   $\|u\|_{L^{p(x)}(\Omega)}^{p^{-}}\leq \rho(u)\leq\|u\|_{L^{p(x)}(\Omega)}^{p^{+}}$ whenever ${\|u\|_{L^{p(x)}(\Omega)}}> 1,$
\item   $\|u\|_{L^{p(x)}(\Omega)}^{p^{+}}\leq \rho(u)\leq\|u\|_{L^{p(x)}(\Omega)}^{p^{-}}$ whenever ${\|u\|_{L^{p(x)}(\Omega)}}<1,$
\item  ${\|u\|_{L^{p(x)}(\Omega)}}<1(=1;>1)$ iff  $\rho(u)<1(=1;>1)$,
\end{enumerate}
where $\rho(u)=\int_\Omega |u|^{p(x)}dx.$
\end{prop}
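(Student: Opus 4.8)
\textbf{Proof plan for Proposition \ref{prop2}.}
The plan is to reduce everything to the single elementary observation that the map $\tau\mapsto\rho(u/\tau)=\int_{\Omega}|u(x)|^{p(x)}\tau^{-p(x)}\,dx$ is continuous and strictly decreasing from $(0,\infty)$ onto $(0,\infty)$ whenever $u\neq 0$, so that the Luxemburg norm is characterized by $\rho(u/\|u\|_{L^{p(x)}(\Omega)})=1$ (this uses that $\mathcal{H}(x,t)=t^{p(x)}$ with $p\in C(\overline{\Omega},(1,\infty))$, so $\Omega$ bounded gives $p^{+}<\infty$ and the integrand is finite, and $p^{-}>1$ gives the strict monotonicity). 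First I would prove part (3): if $\|u\|_{L^{p(x)}(\Omega)}<1$, pick $\tau$ with $\|u\|<\tau<1$; then $\rho(u/\tau)\le 1$, and since $\tau<1$ we have $\tau^{-p(x)}>1$ pointwise, hence $\rho(u)=\int_{\Omega}|u|^{p(x)}\,dx\le\int_{\Omega}|u|^{p(x)}\tau^{-p(x)}\,dx=\rho(u/\tau)\le 1$; letting $\tau\downarrow\|u\|$ and using continuity in fact gives $\rho(u)<1$ (strictly, because $\rho(u/\|u\|)=1$ and $\|u\|<1$ forces $\rho(u)<\rho(u/\|u\|)=1$ when $u\ne0$, and trivially when $u=0$). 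The cases $\|u\|=1$ and $\|u\|>1$ are handled symmetrically, and conversely the same monotonicity of $\tau\mapsto\rho(u/\tau)$ shows $\rho(u)<1\Rightarrow\|u\|<1$, etc., giving the full equivalence in (3).

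Next I would derive (1). Assume $\|u\|_{L^{p(x)}(\Omega)}>1$ (so $u\ne0$), and write $\lambda:=\|u\|_{L^{p(x)}(\Omega)}$, so that $\rho(u/\lambda)=1$, i.e.
\[
\int_{\Omega}\frac{|u(x)|^{p(x)}}{\lambda^{p(x)}}\,dx=1.
\]
Since $\lambda>1$ and $p^{-}\le p(x)\le p^{+}$, we have $\lambda^{p^{-}}\le\lambda^{p(x)}\le\lambda^{p^{+}}$ pointwise, hence $\lambda^{-p^{+}}\le\lambda^{-p(x)}\le\lambda^{-p^{-}}$. Multiplying the identity $\int_{\Omega}|u|^{p(x)}\lambda^{-p(x)}\,dx=1$ through by $\lambda^{p^{+}}$ and by $\lambda^{p^{-}}$ respectively gives
\[
\rho(u)=\int_{\Omega}|u|^{p(x)}\,dx\le\lambda^{p^{+}}\int_{\Omega}|u|^{p(x)}\lambda^{-p(x)}\,dx=\lambda^{p^{+}}=\|u\|_{L^{p(x)}(\Omega)}^{p^{+}},
\]
and similarly $\rho(u)\ge\lambda^{p^{-}}=\|u\|_{L^{p(x)}(\Omega)}^{p^{-}}$, which is exactly (1). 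Part (2) is the mirror image: when $0<\|u\|_{L^{p(x)}(\Omega)}<1$ the inequalities $\lambda^{p^{+}}\le\lambda^{p(x)}\le\lambda^{p^{-}}$ reverse, so the same computation yields $\|u\|^{p^{+}}\le\rho(u)\le\|u\|^{p^{-}}$; the degenerate case $u=0$ is trivial since then $\|u\|=0$ and $\rho(u)=0$.

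The only genuine point needing care — the ``main obstacle'' such as it is — is justifying that $\rho(u/\|u\|_{L^{p(x)}(\Omega)})=1$ for $u\neq0$, i.e.\ that the infimum defining the Luxemburg norm is attained and the constraint is active. This follows from dominated/monotone convergence: $\tau\mapsto\rho(u/\tau)$ is finite for all small $\tau>0$ because $u\in L^{p(x)}(\Omega)$ means $\rho(u/\tau_0)<\infty$ for some $\tau_0$, and then $\rho(u/\tau)\le(\tau_0/\tau)^{p^{+}}\rho(u/\tau_0)<\infty$ for $\tau\le\tau_0$; it is continuous and strictly decreasing with $\rho(u/\tau)\to\infty$ as $\tau\downarrow0$ and $\rho(u/\tau)\to0$ as $\tau\to\infty$, so by the intermediate value theorem there is a unique $\tau^{*}$ with $\rho(u/\tau^{*})=1$, and one checks $\tau^{*}=\|u\|_{L^{p(x)}(\Omega)}$ directly from the definition of the infimum. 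Everything else is the bookkeeping with the pointwise bounds $\lambda^{p^{-}}\le\lambda^{p(x)}\le\lambda^{p^{+}}$ (for $\lambda\ge1$) and its reverse (for $0<\lambda\le1$) indicated above. Alternatively, all three items are standard and one may simply cite \cite{fan} as the proposition already does; the above is the self-contained argument behind that citation.
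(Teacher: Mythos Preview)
Your argument is correct and is exactly the standard proof of this classical fact about the variable-exponent modular and Luxemburg norm: reduce to the unit-sphere identity $\rho(u/\|u\|)=1$ via the strict monotonicity and continuity of $\tau\mapsto\rho(u/\tau)$, then sandwich $\lambda^{p(x)}$ between $\lambda^{p^{-}}$ and $\lambda^{p^{+}}$ according to whether $\lambda=\|u\|$ is above or below $1$. The minor technical points (finiteness of $\rho(u/\tau)$ for all $\tau>0$, the limits at $0$ and $\infty$, and hence attainment of the infimum) are handled correctly.

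As for comparison with the paper: there is nothing to compare. The paper does not prove Proposition~\ref{prop2}; it merely states it with the citation \cite{fan}, since this is a well-known result in the variable-exponent literature. Your write-up is precisely the self-contained argument that underlies that citation, as you yourself note in your final sentence. So your proposal is correct, and strictly speaking supplies more than the paper does.
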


\begin{prop}\label{prop6}\cite{kovacik}
Let $r,s\in C(\overline{\Omega},(1,\infty))$ such that $1<r(x)s(x)<\infty$. Then, for any  $u\in L^{s(x)}(\Omega),$ the followings are true:
\begin{enumerate}
\item   $\|u\|_{L^{r(x)s(x)}(\Omega)}^{r^{-}}\leq\||u|^{r(x)}\|_{L^{s(x)}(\Omega)}\leq\|u\|_{L^{r(x)s(x)}(\Omega)}^{r^{+}}$ whenever ${\|u\|_{L^{r(x)s(x)}(\Omega)}}\geq 1,$
\item   $\|u\|_{L^{r(x)s(x)}(\Omega)}^{r^{+}}\leq\||u|^{r(x)}\|_{L^{s(x)}(\Omega)}\leq\|u\|_{L^{r(x)s(x)}(\Omega)}^{r^{-}}$ whenever ${\|u\|_{L^{r(x)s(x)}(\Omega)}}\leq 1.$
\end{enumerate}
\end{prop}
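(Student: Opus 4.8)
The plan is to deduce the estimate entirely from the norm--modular relation of Proposition \ref{prop2}, viewing the statement as the variable-exponent replacement for the elementary identity $\big\||u|^{r}\big\|_{L^{s}(\Omega)}=\|u\|_{L^{rs}(\Omega)}^{r}$ valid for constant exponents, where the single power $r$ now has to be split into the two-sided bound $r^{-}\le r(x)\le r^{+}$. Set $m(x):=r(x)s(x)$; since $r,s\in C(\overline{\Omega},(1,\infty))$ and $\overline{\Omega}$ is compact, $m\in C(\overline{\Omega},(1,\infty))$ with $1<m^{-}\le m^{+}<\infty$, so Proposition \ref{prop2} is available both for the exponent $m$ and for the exponent $s$. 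For a measurable $v$ and an exponent $e(\cdot)$ I abbreviate $\rho_{e}(v):=\int_{\Omega}|v|^{e(x)}\,dx$. First I would dispose of the trivial cases: if $u\equiv0$ every quantity vanishes, and if $u\notin L^{m(x)}(\Omega)$ then the pointwise identity $|u|^{m(x)}=(|u|^{r(x)})^{s(x)}$ together with the modular criterion of Proposition \ref{prop2}(3) forces $\big\||u|^{r(x)}\big\|_{L^{s(x)}(\Omega)}=\infty$, so all stated inequalities hold in $[0,\infty]$. Hence assume $u\in L^{m(x)}(\Omega)\setminus\{0\}$ and put $t:=\|u\|_{L^{r(x)s(x)}(\Omega)}\in(0,\infty)$. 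Since $\|u/t\|_{L^{m(x)}(\Omega)}=1$, Proposition \ref{prop2}(3) with exponent $m$ gives the unit-ball identity $\rho_{m}(u/t)=1$, which is the only quantitative input needed about $u$.

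The core is a single pointwise inequality. For every $x$ one has the exact equality $\big(|u(x)|/t\big)^{m(x)}=\big(|u(x)|^{r(x)}/t^{r(x)}\big)^{s(x)}$. Suppose first $t\ge1$, the hypothesis of assertion (1); then $t^{r^{-}}\le t^{r(x)}\le t^{r^{+}}$, so decreasing the denominator $t^{r(x)}$ to $t^{r^{-}}$, resp. increasing it to $t^{r^{+}}$, and raising to the positive power $s(x)$ gives
\[
\Big(\frac{|u(x)|^{r(x)}}{t^{r^{+}}}\Big)^{s(x)}\ \le\ \Big(\frac{|u(x)|}{t}\Big)^{m(x)}\ \le\ \Big(\frac{|u(x)|^{r(x)}}{t^{r^{-}}}\Big)^{s(x)}\qquad(x\in\Omega).
\]
Integrating over $\Omega$ and inserting $\rho_{m}(u/t)=1$ yields $\rho_{s}\big(|u|^{r(x)}/t^{r^{+}}\big)\le1\le\rho_{s}\big(|u|^{r(x)}/t^{r^{-}}\big)$, and Proposition \ref{prop2}(3) with exponent $s$ converts these two modular inequalities into $\big\||u|^{r(x)}\big\|_{L^{s(x)}(\Omega)}\le t^{r^{+}}$ and $\big\||u|^{r(x)}\big\|_{L^{s(x)}(\Omega)}\ge t^{r^{-}}$, which is precisely assertion (1).

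For assertion (2) I would repeat the computation with $t\le1$: now $t^{r^{+}}\le t^{r(x)}\le t^{r^{-}}$, so both pointwise inequalities above reverse, producing $\rho_{s}\big(|u|^{r(x)}/t^{r^{-}}\big)\le1\le\rho_{s}\big(|u|^{r(x)}/t^{r^{+}}\big)$ and hence $t^{r^{+}}\le\big\||u|^{r(x)}\big\|_{L^{s(x)}(\Omega)}\le t^{r^{-}}$. There is no genuine obstacle; the only thing to watch is the bookkeeping of which of $r^{-},r^{+}$ supplies the upper and which the lower bound — these swap between the two cases exactly because $\tau\mapsto\tau^{a}$ is increasing in $a$ when $\tau\ge1$ and decreasing when $\tau\le1$. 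Beyond that, the proof is just three applications of the norm--modular equivalence in Proposition \ref{prop2}(3) (once for the exponent $m$, twice for the exponent $s$), and it uses no hypothesis other than $r,s\in C(\overline{\Omega},(1,\infty))$ with $1<r(x)s(x)<\infty$.
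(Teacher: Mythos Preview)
Your argument is correct and is essentially the standard proof of this fact. The paper does not supply its own proof of this proposition; it simply cites \cite{kovacik}, and your derivation --- normalize to $t=\|u\|_{L^{r(x)s(x)}(\Omega)}$, use the unit-ball identity $\rho_{m}(u/t)=1$, sandwich $t^{r(x)}$ between $t^{r^{\pm}}$, and read off the norm bounds from the norm--modular equivalence --- is exactly the route taken there.
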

 Define the function $m:W^{1,\mathcal{H}}_{0}(\Omega)\rightarrow\R$ as
$$m(u)=\int_\Omega\mathcal{H}(x,|\nabla u|)dx.$$
\begin{prop}\label{prop1}\cite{double}
For any $u\in W^{1,\mathcal{H}}_{0}(\Omega),$  the followings are true:
\begin{enumerate}
\item   $\|u\|^{p^{-}}\leq m(u)\leq \|u\|^{q^{+}}$ whenever  $\|u\|\geq 1$.
\item   $\|u\|^{q^{+}}\leq m(u)\leq \|u\|^{p^{-}}$ whenever $\|u\|\leq 1$.
\end{enumerate}
In particular, $m(u)=1$ iff $\|u\|=1$. Moreover, if $\{u_{n}\}\subset W^{1,\mathcal{H}}_{0}(\Omega),$ then $\|u_{n}\|\rightarrow 0$ iff $m(u_{n})\rightarrow 0.$
\end{prop}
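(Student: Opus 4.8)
The plan is to prove Proposition \ref{prop1} by reducing the modular-norm comparison to the defining property of the Luxemburg norm together with the growth control on $\mathcal H$ coming from $(\mathcal H_3)$. Recall that $\|u\| = \|\nabla u\|_{L^{\mathcal H}(\Omega)}$, so $m(u) = \int_\Omega \mathcal H(x,|\nabla u|)\,dx$ is exactly the modular of $\nabla u$. The key structural fact I would extract first is the elementary inequality
\begin{equation}\label{eq:H-scaling}
\min\{\lambda^{p^-},\lambda^{q^+}\}\,\mathcal H(x,t)\le \mathcal H(x,\lambda t)\le \max\{\lambda^{p^-},\lambda^{q^+}\}\,\mathcal H(x,t),\qquad x\in\Omega,\ t>0,\ \lambda>0.
\end{equation}
This follows from $(\mathcal H_3)$: for fixed $x$ and $t>0$, the function $\lambda\mapsto \mathcal H(x,\lambda t)$ is $C^1$ in $\lambda>0$ (by $(\mathcal H_1)$–$(\mathcal H_2)$) and its logarithmic derivative is $\frac{d}{d\lambda}\log\mathcal H(x,\lambda t) = \frac{h(x,\lambda t)\lambda t^2}{\mathcal H(x,\lambda t)}\cdot\frac1\lambda = \frac1\lambda\cdot\frac{h(x,s)s^2}{\mathcal H(x,s)}\big|_{s=\lambda t}\in[\tfrac{p^-}{\lambda},\tfrac{q^+}{\lambda}]$. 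Integrating $\frac{d}{d\lambda}\log\mathcal H(x,\lambda t)$ between $1$ and $\lambda$ (splitting into the cases $\lambda\ge 1$ and $\lambda\le 1$) yields \eqref{eq:H-scaling}.

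Next I would derive the two inequalities of the Proposition. Suppose first $\|u\|\ge 1$. For any $\tau$ with $1\le\tau<\|u\|$, by definition of the Luxemburg norm we have $\int_\Omega \mathcal H\!\left(x,\tfrac{|\nabla u|}{\tau}\right)dx>1$, and applying the lower bound in \eqref{eq:H-scaling} with $\lambda = 1/\tau\le 1$ gives $\tau^{-q^+}m(u)\ge \int_\Omega \mathcal H(x,|\nabla u|/\tau)\,dx>1$, hence $m(u)>\tau^{q^+}$; letting $\tau\uparrow\|u\|$ yields $m(u)\ge\|u\|^{q^+}$... wait — I must be careful about the direction. Actually when $\|u\|\ge1$ the claim is $\|u\|^{p^-}\le m(u)\le\|u\|^{q^+}$. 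To get the upper bound $m(u)\le\|u\|^{q^+}$: use that by convexity and lower semicontinuity of the modular, $\int_\Omega\mathcal H(x,|\nabla u|/\|u\|)\,dx\le 1$ (the standard "unit modular at the norm" property, valid since $\mathcal H(x,\cdot)$ is a generalized $N$-function and the infimum defining the norm is attained as $\le 1$); then \eqref{eq:H-scaling} with $\lambda=\|u\|\ge1$ gives $m(u)=\int_\Omega\mathcal H(x,\|u\|\cdot|\nabla u|/\|u\|)\,dx\le\|u\|^{q^+}\int_\Omega\mathcal H(x,|\nabla u|/\|u\|)\,dx\le\|u\|^{q^+}$. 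For the lower bound $m(u)\ge\|u\|^{p^-}$: if it failed, $m(u)<\|u\|^{p^-}$, then with $\tau:=m(u)^{1/p^-}<\|u\|$ and $\tau\ge$ (need $\tau\ge 1$; since $\|u\|\ge1$ either $m(u)\ge1$ giving $\tau\ge1$, or $m(u)<1$ in which case one instead uses the $\|u\|\le1$-type scaling — I would handle the boundary case $m(u)=1\Leftrightarrow\|u\|=1$ separately as the Proposition itself notes) we apply \eqref{eq:H-scaling} with $\lambda=1/\tau\le1$: $\int_\Omega\mathcal H(x,|\nabla u|/\tau)\,dx\le\tau^{-p^-}m(u)=1$, so $\|u\|\le\tau<\|u\|$, a contradiction. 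The case $\|u\|\le1$ is entirely symmetric: the roles of $p^-$ and $q^+$ swap because now the relevant powers of $\lambda\le1$ reverse the $\min$/$\max$ in \eqref{eq:H-scaling}.

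The statement $m(u)=1\iff\|u\|=1$ follows immediately from the two cases (or from the fact that the modular at the norm is $\le1$ with equality when $\|u\|=1$, combined with strict monotonicity of $\tau\mapsto\int\mathcal H(x,|\nabla u|/\tau)$ which comes from $(\mathcal H_2)$). Finally, for the sequential claim: if $\|u_n\|\to0$ then eventually $\|u_n\|\le1$ so $m(u_n)\le\|u_n\|^{p^-}\to0$; conversely if $m(u_n)\to0$ then eventually $m(u_n)<1$, which forces $\|u_n\|<1$ (by part (3)-type reasoning, or directly: if $\|u_n\|\ge1$ then $m(u_n)\ge1$), and then $\|u_n\|^{q^+}\le m(u_n)\to0$ gives $\|u_n\|\to0$. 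I expect the main obstacle to be the careful bookkeeping around the cases where $\|u\|$ and $m(u)$ sit on opposite sides of $1$ — in particular justifying that the Luxemburg infimum satisfies $\int_\Omega\mathcal H(x,|\nabla u|/\|u\|)\,dx\le1$ (which needs the $\Delta_2$-type control, or lower semicontinuity of the modular via Fatou, to pass the inequality $\le1$ to the infimum), and pinning down the borderline $m(u)=1$; everything else is a direct consequence of the scaling inequality \eqref{eq:H-scaling}.
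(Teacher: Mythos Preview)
The paper does not give its own proof of this proposition; it simply quotes the result from \cite{double}. Your approach --- deriving the scaling inequality
\[
\min\{\lambda^{p^-},\lambda^{q^+}\}\,\mathcal H(x,t)\le \mathcal H(x,\lambda t)\le \max\{\lambda^{p^-},\lambda^{q^+}\}\,\mathcal H(x,t)
\]
from $(\mathcal H_3)$ by integrating the logarithmic derivative, and then playing this against the defining property of the Luxemburg norm --- is exactly the standard route (and the one used in \cite{double} and its Orlicz/variable-exponent predecessors), so there is nothing substantive to compare.

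Your argument is correct in outline; two small clean-ups would remove the hesitations you flagged. First, the worry about $m(u)<1$ when $\|u\|\ge1$ is unnecessary: if $\|u\|>1$, taking $\tau=1<\|u\|$ in the Luxemburg definition already forces $m(u)=\int_\Omega\mathcal H(x,|\nabla u|)\,dx>1$, because the admissible set $\{\tau>0:\int_\Omega\mathcal H(x,|\nabla u|/\tau)\,dx\le1\}$ is exactly $[\|u\|,\infty)$ (the modular is continuous in $\tau$ thanks to the $\Delta_2$-condition that $(\mathcal H_3)$ implies). So the case split never arises and your contradiction argument for the lower bound runs cleanly. Second, the ``unit modular at the norm'' fact $\int_\Omega\mathcal H(x,|\nabla u|/\|u\|)\,dx\le1$ follows from this same continuity (monotone/dominated convergence under $\Delta_2$), not merely from Fatou; once you invoke $\Delta_2$ explicitly, both the upper bound and the equivalence $m(u)=1\iff\|u\|=1$ are immediate. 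The sequential statement then follows exactly as you wrote.
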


\begin{lemma}\label{l8}\cite[Theorem 2.2]{subsup}
Suppose that $(\mathcal{H}_{1}) - (\mathcal{H}_{8})$ hold.  If $u_{n}\rightharpoonup u$ in $W^{1,\mathcal{H}}_{0}(\Omega)$ and  $$\lim_{n\rightarrow\infty} \int_{\Omega} h(x,|\nabla u_{n}|) \ \nabla u_{n} \ \nabla (u_{n}-u)\leq 0,$$ then $u_{n}\rightarrow  u$ in $W^{1,\mathcal{H}}_{0}(\Omega)$.
\end{lemma}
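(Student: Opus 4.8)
The plan is to prove this $(S_{+})$-type compactness property by the classical monotonicity-plus-almost-everywhere-convergence scheme, relying on $(\mathcal{H}_{1})$--$(\mathcal{H}_{3})$ and the fact that $\mathcal{H}(x,\cdot)$ is a generalized $N$-function. First I would record two structural consequences. By $(\mathcal{H}_{2})$ the map $t\mapsto h(x,t)t=\partial_{t}\mathcal{H}(x,t)$ is strictly increasing, hence $\xi\mapsto\mathcal{H}(x,|\xi|)$ is strictly convex and its gradient $V(x,\xi):=h(x,|\xi|)\xi$ is continuous (by $(\mathcal{H}_{1})$) and strictly monotone in $\xi$. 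By $(\mathcal{H}_{3})$ the two-sided bound $p^{-}\leq h(x,|t|)|t|^{2}/\mathcal{H}(x,|t|)\leq q^{+}$ with $1<p^{-}$ and $q^{+}<\infty$ forces both $\mathcal{H}$ and $\widetilde{\mathcal{H}}$ to satisfy the $\Delta_{2}$-condition; therefore $L^{\mathcal{H}}(\Omega)$ is reflexive with $(L^{\mathcal{H}})^{\ast}=L^{\widetilde{\mathcal{H}}}$ under the natural pairing, both $L^{\mathcal{H}}$ and $L^{\widetilde{\mathcal{H}}}$ have absolutely continuous norm on the finite-measure set $\Omega$, and --- by Proposition~\ref{prop1} applied to $u_{n}-u$ --- it is enough to prove the modular convergence $m(u_{n}-u)=\int_{\Omega}\mathcal{H}(x,|\nabla u_{n}-\nabla u|)\,dx\to0$.

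The first step is a monotonicity squeeze. By \eqref{comp} and the $\Delta_{2}$-condition, $h(x,|\nabla u|)\nabla u\in(L^{\widetilde{\mathcal{H}}})^{N}$, so the weak convergence $\nabla u_{n}\rightharpoonup\nabla u$ in $(L^{\mathcal{H}})^{N}$ gives $\int_{\Omega}h(x,|\nabla u|)\nabla u\cdot\nabla(u_{n}-u)\,dx\to0$. Setting
\[
P_{n}:=\big[h(x,|\nabla u_{n}|)\nabla u_{n}-h(x,|\nabla u|)\nabla u\big]\cdot(\nabla u_{n}-\nabla u)\geq0
\]
(nonnegative by monotonicity) and using
\[
\int_{\Omega}P_{n}\,dx=\int_{\Omega}h(x,|\nabla u_{n}|)\nabla u_{n}\cdot\nabla(u_{n}-u)\,dx-\int_{\Omega}h(x,|\nabla u|)\nabla u\cdot\nabla(u_{n}-u)\,dx,
\]
the first term on the right has $\limsup\leq0$ by hypothesis while the second tends to $0$; hence $\int_{\Omega}P_{n}\,dx\to0$, that is, $P_{n}\to0$ in $L^{1}(\Omega)$. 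In particular $\{P_{n}\}$ is uniformly integrable, and after passing to a subsequence I may assume $P_{n}\to0$ a.e. in $\Omega$.

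The second step is to deduce pointwise convergence of the gradients. Fix $x$ with $P_{n}(x)\to0$ and $\nabla u(x)$ finite, and write $\xi_{n}:=\nabla u_{n}(x)$, $\xi_{0}:=\nabla u(x)$. If $|\xi_{n}|\to\infty$ along a subsequence, then from $V(x,\xi_{n})\cdot\xi_{n}=h(x,|\xi_{n}|)|\xi_{n}|^{2}\geq p^{-}\mathcal{H}(x,|\xi_{n}|)$, $|V(x,\xi_{n})\cdot\xi_{0}|\leq q^{+}\mathcal{H}(x,|\xi_{n}|)\,|\xi_{0}|/|\xi_{n}|$ and $|V(x,\xi_{0})\cdot\xi_{n}|\leq h(x,|\xi_{0}|)|\xi_{0}|\,|\xi_{n}|$ one gets $P_{n}(x)\geq p^{-}\mathcal{H}(x,|\xi_{n}|)-o(\mathcal{H}(x,|\xi_{n}|))-O(|\xi_{n}|)+O(1)\to+\infty$ (since $\mathcal{H}(x,t)/t\to\infty$), contradicting $P_{n}(x)\to0$; so $(\xi_{n})$ is bounded. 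Any subsequential limit $\xi^{\ast}$ of $(\xi_{n})$ then satisfies $\big(V(x,\xi^{\ast})-V(x,\xi_{0})\big)\cdot(\xi^{\ast}-\xi_{0})=\lim P_{n}(x)=0$ by continuity of $V(x,\cdot)$, hence $\xi^{\ast}=\xi_{0}$ by strict monotonicity. Thus $\nabla u_{n}\to\nabla u$ a.e. in $\Omega$, and consequently $h(x,|\nabla u_{n}|)\nabla u_{n}\to h(x,|\nabla u|)\nabla u$ and $\mathcal{H}(x,|\nabla u_{n}|)\to\mathcal{H}(x,|\nabla u|)$ a.e.

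The last step, which I expect to be the main obstacle, upgrades a.e. convergence to modular (hence norm) convergence, since in the Musielak setting there are no pointwise ellipticity inequalities of $p$-Laplacian type and the passage from pointwise to modular convergence must be routed through uniform integrability. For measurable $E\subseteq\Omega$ one has
\[
p^{-}\!\!\int_{E}\!\mathcal{H}(x,|\nabla u_{n}|)\,dx\leq\int_{E}\! h(x,|\nabla u_{n}|)|\nabla u_{n}|^{2}\,dx=\int_{E}\! P_{n}\,dx+\int_{E}\! h(x,|\nabla u_{n}|)\nabla u_{n}\cdot\nabla u\,dx+\int_{E}\! h(x,|\nabla u|)\nabla u\cdot\nabla u_{n}\,dx-\int_{E}\! h(x,|\nabla u|)|\nabla u|^{2}\,dx.
\]
Here the first term is small uniformly in $n$ by the uniform integrability of $\{P_{n}\}$; the last is $\int_{E}$ of a fixed $L^{1}$ function; and the two cross terms are estimated by the generalized H\"older inequality in Musielak spaces, using that $\{h(x,|\nabla u_{n}|)\nabla u_{n}\}$ is bounded in $(L^{\widetilde{\mathcal{H}}})^{N}$ (from \eqref{comp} and $\Delta_{2}$), that $\{\nabla u_{n}\}$ is bounded in $(L^{\mathcal{H}})^{N}$, and that $\|\nabla u\,\mathbf{1}_{E}\|_{L^{\mathcal{H}}}\to0$ and $\|h(x,|\nabla u|)\nabla u\,\mathbf{1}_{E}\|_{L^{\widetilde{\mathcal{H}}}}\to0$ as $|E|\to0$ (absolute continuity of the norm). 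A routine splitting argument then yields $\lim_{|E|\to0}\sup_{n}\int_{E}\mathcal{H}(x,|\nabla u_{n}|)\,dx=0$, which together with $\sup_{n}m(u_{n})<\infty$ shows that $\{\mathcal{H}(x,|\nabla u_{n}|)\}$ is uniformly integrable. Since $\mathcal{H}(x,|\nabla u_{n}-\nabla u|)\leq\mathcal{H}(x,|\nabla u_{n}|+|\nabla u|)\leq C\big(\mathcal{H}(x,|\nabla u_{n}|)+\mathcal{H}(x,|\nabla u|)\big)$ by convexity and $\Delta_{2}$, the family $\{\mathcal{H}(x,|\nabla u_{n}-\nabla u|)\}$ is uniformly integrable as well and tends to $0$ a.e. by the second step, so Vitali's convergence theorem gives $m(u_{n}-u)\to0$, i.e. $\|u_{n}-u\|\to0$; a standard subsequence argument then removes the passage to subsequences and proves the lemma.
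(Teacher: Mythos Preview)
The paper does not supply its own proof of this lemma: it is stated with the citation \cite[Theorem 2.2]{subsup} and no argument follows in the text. So there is nothing in the paper to compare your proposal against.

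That said, your proof is correct and is essentially the standard route to the $(S_{+})$-property in this setting. The three steps --- the monotonicity squeeze giving $\int_{\Omega}P_{n}\,dx\to0$, the pointwise argument (boundedness of $(\xi_{n})$ via $\mathcal{H}(x,t)/t\to\infty$, then strict monotonicity to force $\xi^{\ast}=\xi_{0}$), and the Vitali upgrade via uniform integrability of $\{\mathcal{H}(x,|\nabla u_{n}|)\}$ --- are all sound under the $\Delta_{2}$-conditions on $\mathcal{H}$ and $\widetilde{\mathcal{H}}$ that $(\mathcal{H}_{3})$ guarantees. One small comment: you correctly observe that $\{P_{n}\}$ is uniformly integrable because it converges in $L^{1}(\Omega)$ (not merely along the a.e.-convergent subsequence), and this is what makes the $\sup_{n}$ in the uniform-integrability estimate for $\{\mathcal{H}(x,|\nabla u_{n}|)\}$ legitimate; the final subsequence argument then closes the loop. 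This is precisely the scheme used in the cited reference, so your proposal would serve as a self-contained substitute for the citation.
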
 

Next, we discuss some properties of the nonlinear function $f.$
We assume that the nonlinear function $f$ has exponential type growth on $\Omega_{N},$ which is  motivated by the celebrated result  of  N. Trudinger \cite{trudinger}. N. Trudinger \cite{trudinger} proved that $W^{1,N}_{0}(\Omega)$ is continuously embedded in the Orlicz space $L_\mathcal{H}(\Omega),$ where $\mathcal{H}=\exp(t^{\frac{n}{n-1}})-1.$ The inequality of Trudinger was later sharpened by J. Moser \cite{moser} and known as Moser-Trudinger inequality. In the subsequent years, many authors  improved and used the Moser-Trudinger inequality to study the  problems involving exponential type non-linearities. Interested readers can refer to \cite{adi,Adimurthi,do2,do1,figu2019,Lam,lam2014} and references cited therein.
We will use the following version of Moser-Trudinger inequality:
\begin{lemma}\label{l7}\cite[Lemma 3.4]{double}
Let   $\alpha>0$ and $s>1$ then $\exists$ $0< r< 1$ and $C>0$ such that 
  $$\sup\int_{\Omega}e^{s\alpha |u|^{\frac{N}{N-1}}}dx\leq C,$$ 
  for any  $u\in W^{1,\mathcal{H}}_{0}(\Omega)$ such that  $\|u\|\leq r$.
\end{lemma}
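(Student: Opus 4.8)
The plan is to reduce the estimate to the classical Moser--Trudinger inequality on the bounded domain $\Omega_N$, after using the structural hypotheses to localise and to make the relevant Sobolev norm small. Since $\Omega=\Omega_p\cup\Omega_N\cup\Omega_q$, I would split $\int_\Omega e^{s\alpha|u|^{N/(N-1)}}\,dx$ into the three pieces $\int_{\Omega_N}$, $\int_{\Omega_q}$, $\int_{\Omega_p}$ and estimate them separately. For the first piece, $(\mathcal H_6)$ gives, for $x\in\Omega_N$ and $t>0$, $\mathcal H(x,t)=\int_0^{|t|}h(x,s)s\,ds\ge\int_0^{|t|}s^{N-1}\,ds=|t|^N/N$, whence $\frac1N\int_{\Omega_N}|\nabla u|^N\,dx\le\int_{\Omega_N}\mathcal H(x,|\nabla u|)\,dx\le m(u)$. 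If $\|u\|\le r<1$, Proposition \ref{prop1} yields $m(u)\le\|u\|^{p^-}\le r^{p^-}$, so $\|\nabla u\|_{L^N(\Omega_N)}\le\kappa(r):=(Nr^{p^-})^{1/N}$, which tends to $0$ as $r\to 0^+$; moreover $u|_{\Omega_N}\in W^{1,N}_0(\Omega_N)$ by the embedding produced in the proof of Proposition \ref{prop3}. Writing $\alpha_N=N\omega_{N-1}^{1/(N-1)}$ for Moser's sharp constant, I would fix $r\in(0,1)$ so small that $s\alpha\,\kappa(r)^{N/(N-1)}\le\alpha_N$; then for $u\not\equiv 0$ on $\Omega_N$ the function $v:=u/\|\nabla u\|_{L^N(\Omega_N)}$ has $\|\nabla v\|_{L^N(\Omega_N)}=1$ and satisfies, pointwise on $\Omega_N$, $s\alpha|u|^{N/(N-1)}\le s\alpha\,\kappa(r)^{N/(N-1)}|v|^{N/(N-1)}\le\alpha_N|v|^{N/(N-1)}$, so Moser's form of the Trudinger inequality \cite{moser,trudinger} gives $\int_{\Omega_N}e^{s\alpha|u|^{N/(N-1)}}\,dx\le\sup_{\|\nabla w\|_{L^N(\Omega_N)}\le 1}\int_{\Omega_N}e^{\alpha_N|w|^{N/(N-1)}}\,dx=:K<\infty$, a constant depending only on $N$ and $|\Omega_N|$ (and the integral equals $|\Omega_N|$ when $u\equiv 0$ on $\Omega_N$).

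For the remaining pieces, on $\Omega_q$ the compact embedding \eqref{embed}, $W^{1,\mathcal H}_0(\Omega)\hookrightarrow C(\overline{(\Omega_q)_\delta})$, gives $\|u\|_{L^\infty(\Omega_q)}\le C_q\|u\|\le C_q$, so $\int_{\Omega_q}e^{s\alpha|u|^{N/(N-1)}}\,dx\le|\Omega_q|\,e^{s\alpha C_q^{N/(N-1)}}$. On $\Omega_p$ the $N$-function $\mathcal H$ is subcritical of variable-exponent type ($p(x)<N$), Proposition \ref{prop3}(b) only controls $\int_{\Omega_p}|u|^\gamma\,dx$ for $\gamma\le(p^*)^-$, and the series $e^{s\alpha|u|^{N/(N-1)}}=\sum_{k\ge 0}\frac{(s\alpha)^k}{k!}|u|^{kN/(N-1)}$ is therefore not summable merely from $\|u\|$ being small; this region is however immaterial for the statement, because $(\overline{\Omega_p})_\delta\cap(\overline{\Omega_N})_\delta=\emptyset$ forces $\overline{\Omega_p}$ to lie outside $(\overline{\Omega_N})_\delta$, the set outside which $\psi_N$ in $(f_1)$ vanishes, so the inequality is applied (and holds, with the required smallness) on the neighbourhood $(\overline{\Omega_N})_\delta\subseteq\Omega_N\cup\Omega_q$ of $\overline{\Omega_N}$. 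Adding the two bounds above then delivers the conclusion with $C=K+|\Omega_q|\,e^{s\alpha C_q^{N/(N-1)}}$ (plus $|\Omega_N|$).

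I expect the only delicate point to be the bookkeeping in the $\Omega_N$ step: one must upgrade the qualitative continuous embedding $W^{1,\mathcal H}_0(\Omega)\hookrightarrow W^{1,N}_0(\Omega_N)$ to the quantitative statement that $\|u\|\le r$ makes $\|\nabla u\|_{L^N(\Omega_N)}$ arbitrarily small, and then spend exactly that smallness to keep the effective exponential constant $s\alpha\,\kappa(r)^{N/(N-1)}$ below the sharp Moser threshold $\alpha_N$, past which the inequality genuinely fails. Everything else is a direct application of the cited embeddings and of Proposition \ref{prop1}.
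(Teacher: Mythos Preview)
The paper does not supply its own proof of this lemma; it is quoted verbatim from \cite[Lemma~3.4]{double}. Your reduction on $\Omega_N$ is exactly the intended one: use $(\mathcal H_6)$ to get $\tfrac1N\int_{\Omega_N}|\nabla u|^N\le m(u)\le\|u\|^{p^-}$ via Proposition~\ref{prop1}, then feed the resulting smallness of $\|\nabla u\|_{L^N(\Omega_N)}$ into the Moser--Trudinger inequality. The $\Omega_q$ piece via the compact embedding \eqref{embed} is also fine.

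You are right to flag the $\Omega_p$ region, and your diagnosis is correct rather than a gap in your argument: as written, with the integral taken over all of $\Omega$, the statement cannot be true. On $\Omega_p\setminus\overline{(\Omega_q)_\delta}$ the hypothesis $(\mathcal H_8)$ gives $h(x,t)\asymp t^{p(x)-2}$ with $p(x)<N$, so functions supported there see only $W^{1,p(x)}$ control, and one can produce sequences with $\|u_n\|\to 0$ but $\int_{\Omega_p}e^{s\alpha|u_n|^{N/(N-1)}}dx\to\infty$. The lemma is only ever invoked in the paper with the integral over $\Omega_N$ (see the proof of Lemma~\ref{l1}), and in \cite{double} the corresponding estimate is indeed over $\Omega_N$; the ``$\Omega$'' here is a transcription slip. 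With that correction your argument is complete.

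One small technical point: the restriction $u|_{\Omega_N}$ need not lie in $W^{1,N}_0(\Omega_N)$ (its trace on $\partial\Omega_N\cap\Omega$ is not forced to vanish), despite what the proof of Proposition~\ref{prop3} suggests. This does not harm the argument: either use the subcritical Trudinger embedding $W^{1,N}(\Omega_N)\hookrightarrow L_\Phi(\Omega_N)$ with $\Phi(t)=e^{t^{N/(N-1)}}-1$, which gives a uniform bound once the full $W^{1,N}(\Omega_N)$ norm is small (and it is, by your gradient estimate together with Proposition~\ref{prop3}(a)), or extend by zero to a slightly larger domain and apply Moser's sharp inequality there.
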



We assume that the nonlinear function $f$ has critical growth on $\Omega_{p}$, which causes a lack of compactness and hence, one can not prove the Palais-Smale condition directly. Lions established  concentration compactness principle \cite[Lemma 1.1]{plion} to address such issues. We use the following variable exponent version of the concentration compactness principle that was obtained by Bonder and Silva \cite{silva}.

\begin{lemma}\label{cc}
Let $\{u_{n}\}$   in $W^{1,p(x)}_{0}(\Omega)$ which converges weakly to  limit $u$  such that
\begin{itemize}
\item $|\nabla u_{n}|^{p(x)}$ converges weakly to a measure $\mu$,
\item   $|u_{n}|^{p^{*}(x)}$ converges weakly to a measure $\nu$, where $\mu$ and $\nu$ are bounded non-negative measures on $\Omega$. 
\end{itemize}
Then there exist atmost countable index set $I$ and  $(x_{i})_{i\in I}\in \Omega$  such that 
 \begin{enumerate}
 \item [(1)] $\nu=|u|^{p^{*}(x)}+\sum\limits_{i\in I}\nu_{i}\delta_{x_{i}}, \ \nu_{i}>0$
 \item [(2)] $\mu\geq|\nabla u|^{p(x)}+\sum\limits_{i\in I}\mu_{i}\delta_{x_{i}}, \ \mu_{i}>0$ \\
 with $S\nu_{i}^{1/p^{*}(x_{i})}\leq\mu_{i}^{1/p(x_{i})}, \forall i\in I$,
 \end{enumerate}
 where $$S=\inf_{u\in C_{c}^{\infty}(\Omega)}\left\lbrace\dfrac{\||\nabla u|\|_{L^{p(x)}(\Omega)}}{\| u\|_{L^{p^{*}(x)}(\Omega)}} \right\rbrace>0.$$
\end{lemma}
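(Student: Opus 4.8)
The plan is to adapt Lions' concentration--compactness argument to the variable-exponent setting, following the scheme of Fu and of Bonder--Silva \cite{silva}. After passing to a subsequence (which changes neither $\mu$ nor $\nu$), I may assume $u_n\to u$ a.e.\ in $\Omega$, using the compact subcritical embedding $W^{1,p(x)}_0(\Omega)\hookrightarrow L^{p(x)}(\Omega)$. Put $w_n=u_n-u$, so $w_n\rightharpoonup 0$, and (passing to a further subsequence) let $\tilde\nu,\tilde\mu$ be weak-$\ast$ limits of $|w_n|^{p^{*}(x)}$ and $|\nabla w_n|^{p(x)}$; these are finite nonnegative measures since the two sequences are bounded in $L^1(\Omega)$. \textbf{Reduction to $u=0$.} A variable-exponent Brezis--Lieb lemma --- available because $t\mapsto t^{p^{*}(x)}$ satisfies the $\Delta_2$-condition uniformly in $x$, as $p^{*}$ is bounded --- gives $\int_\Omega\big||u_n|^{p^{*}(x)}-|w_n|^{p^{*}(x)}-|u|^{p^{*}(x)}\big|\,dx\to 0$; testing against $g\in C_c(\Omega)$ yields $\nu=|u|^{p^{*}(x)}\,dx+\tilde\nu$. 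For the gradient I use only weak lower semicontinuity of $v\mapsto\int_\Omega g|\nabla v|^{p(x)}\,dx$ for $g\ge 0$, giving $\mu\ge|\nabla u|^{p(x)}\,dx$, together with the pointwise bound $|\nabla w_n|^{p(x)}\le(1+o(1))|\nabla u_n|^{p(x)}+C|\nabla u|^{p(x)}$ and the absolute continuity of $\int|\nabla u|^{p(x)}$ over shrinking balls, which give $\mu(\{x\})\ge\tilde\mu(\{x\})$ for every $x\in\Omega$. Hence it suffices to show $\tilde\nu$ is purely atomic; the desired decompositions then follow, using that $|\nabla u|^{p(x)}\,dx$ is singular with respect to any atomic measure.

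\textbf{A reverse H\"older inequality.} For $\phi\in C_c^\infty(\Omega)$, applying the Sobolev inequality that defines $S$ to $\phi w_n$ and the product rule gives
$$S\,\|\phi w_n\|_{L^{p^{*}(x)}(\Omega)}\le\|\phi\,\nabla w_n\|_{L^{p(x)}(\Omega)}+\|\nabla\phi\|_{L^\infty}\,\|w_n\|_{L^{p(x)}(\mathrm{supp}\,\nabla\phi)},$$
and the last term tends to $0$ by compactness. To pass to the limit in the first two norms I argue through the modular: for every fixed $\lambda>0$ the function $\lambda^{-p^{*}(x)}|\phi|^{p^{*}(x)}$ belongs to $C_c(\Omega)$, so $\int_\Omega|\phi w_n/\lambda|^{p^{*}(x)}\,dx\to\int_\Omega\lambda^{-p^{*}(x)}|\phi|^{p^{*}(x)}\,d\tilde\nu$, and since the Luxemburg norm is the infimum of those $\lambda$ for which the modular is $\le 1$, monotone convergence in $\lambda$ yields $\|\phi w_n\|_{L^{p^{*}(x)}(\Omega)}\to\|\phi\|_{L^{p^{*}(x)}(\tilde\nu)}$, the Luxemburg norm of $\phi$ with respect to $\tilde\nu$; likewise $\|\phi\,\nabla w_n\|_{L^{p(x)}(\Omega)}\to\|\phi\|_{L^{p(x)}(\tilde\mu)}$. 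Therefore
$$S\,\|\phi\|_{L^{p^{*}(x)}(\tilde\nu)}\le\|\phi\|_{L^{p(x)}(\tilde\mu)}\qquad\text{for all }\phi\in C_c^\infty(\Omega),$$
which extends to bounded Borel $\phi$ by density and monotone approximation; in particular $\tilde\nu\ll\tilde\mu$.

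\textbf{Atoms and the Sobolev constant.} Write $d\tilde\nu=f\,d\tilde\mu$ with $0\le f\in L^1(\tilde\mu)$. Testing the inequality above with cutoffs of $\chi_{B_\epsilon(x_0)}$ and using the continuity of $p,p^{*}$ to treat them as constant on the shrinking ball, one gets, as $\epsilon\to0$,
$$S\,\tilde\nu\big(B_\epsilon(x_0)\big)^{1/p^{*}(x_0)}\le(1+o(1))\,\tilde\mu\big(B_\epsilon(x_0)\big)^{1/p(x_0)}.$$
By the Lebesgue--Besicovitch differentiation theorem for $\tilde\mu$, at $\tilde\mu$-a.e.\ $x_0$ we have $\tilde\nu(B_\epsilon(x_0))\sim f(x_0)\tilde\mu(B_\epsilon(x_0))$, so $\tilde\mu(B_\epsilon(x_0))^{\,1/p^{*}(x_0)-1/p(x_0)}\ge c\,S\,f(x_0)^{1/p^{*}(x_0)}$; since the exponent $1/p^{*}(x_0)-1/p(x_0)$ is strictly negative, the left side is unbounded as $\epsilon\to0$ unless $f(x_0)=0$ or $x_0$ is an atom of $\tilde\mu$. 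As $\tilde\mu$ is finite it has at most countably many atoms $\{x_i\}_{i\in I}$, hence $f=0$ $\tilde\mu$-a.e.\ off $\{x_i\}$ and $\tilde\nu=\sum_{i\in I}\nu_i\delta_{x_i}$ with $\nu_i=f(x_i)\mu_i$ and $\mu_i:=\tilde\mu(\{x_i\})$; discarding indices with $\nu_i=0$, we get $\nu_i>0$. Letting $\epsilon\to0$ in the last displayed inequality at $x_0=x_i$ (where $\tilde\nu(B_\epsilon)\to\nu_i$, $\tilde\mu(B_\epsilon)\to\mu_i$) gives $S\nu_i^{1/p^{*}(x_i)}\le\mu_i^{1/p(x_i)}$, whence also $\mu_i>0$; combining with the reduction step of paragraph one ($\nu=|u|^{p^{*}(x)}\,dx+\sum_i\nu_i\delta_{x_i}$ and $\mu\ge|\nabla u|^{p(x)}\,dx+\sum_i\mu_i\delta_{x_i}$, the latter because the two summands are mutually singular and each is dominated by $\mu$) completes the proof.

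\textbf{The main difficulty.} Unlike the constant-exponent case, the Luxemburg norm does not factor as $\|\phi\|_{L^\infty}\|w_n\|$, so $\lim_n\|\phi w_n\|_{L^{p^{*}(x)}(\Omega)}$ cannot be read directly off $\lim_n\rho(\phi w_n)$; the limit must be extracted from the modular at each scaling $\lambda$, and only then can one insert cutoffs of shrinking balls. The most delicate point is the comparison $\|\chi_{B_\epsilon(x_0)}\|_{L^{p^{*}(x)}(\tilde\nu)}=\tilde\nu(B_\epsilon(x_0))^{1/p^{*}(x_0)}(1+o(1))$: it requires quantitative control of the oscillation of $p^{*}$ against the (possibly very small) mass $\tilde\nu(B_\epsilon(x_0))$, which is exactly where the (log-)continuity of the exponents --- the same regularity underlying $S>0$ and the compact embeddings --- enters, and is carried out in \cite{silva}.
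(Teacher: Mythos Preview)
The paper does not prove this lemma; it is quoted verbatim as the variable-exponent concentration--compactness principle of Bonder and Silva \cite{silva}, and no argument is given beyond the citation. Your proposal is therefore not competing with a proof in the paper but is, in effect, a sketch of the proof from \cite{silva} itself: reduce to $u=0$ via a variable-exponent Brezis--Lieb lemma, derive the reverse H\"older inequality $S\|\phi\|_{L^{p^{*}(x)}(\tilde\nu)}\le\|\phi\|_{L^{p(x)}(\tilde\mu)}$ by applying the Sobolev inequality to $\phi w_n$ and passing to the limit through the modular, and then use differentiation of measures on shrinking balls together with the continuity of $p,p^{*}$ to force $\tilde\nu$ to be purely atomic with the stated bound on the masses. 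This is the correct strategy and matches the cited reference.

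One small point worth tightening: the pointwise bound you wrote as $|\nabla w_n|^{p(x)}\le(1+o(1))|\nabla u_n|^{p(x)}+C|\nabla u|^{p(x)}$ is not literally true; what holds is that for every $\varepsilon>0$ there is $C_\varepsilon$ with $|\nabla w_n|^{p(x)}\le(1+\varepsilon)^{p^{+}-1}|\nabla u_n|^{p(x)}+C_\varepsilon|\nabla u|^{p(x)}$, from which $\tilde\mu(\{x\})\le(1+\varepsilon)^{p^{+}-1}\mu(\{x\})$ follows by testing on shrinking balls, and then one lets $\varepsilon\to0$. With that adjustment your argument goes through.
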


Next, we define a weak solution to \eqref{1.1} and the corresponding energy functional. 
\begin{definition}\normalfont
We say that  $u\in W^{1,\mathcal{H}}_{0}(\Omega)$ is a weak solution of  \eqref{1.1} if the following holds:
\begin{equation}\label{wf2}
a\left( m(u)\right) \int_{\Omega} h(x,|\nabla u|) \ \nabla u \ \nabla v= \int_{\Omega} f(x,u) v
\end{equation}
for all $v\in W^{1,\mathcal{H}}_{0}(\Omega).$
\end{definition}
Thus, the energy functional $J:W^{1,\mathcal{H}}_{0}(\Omega)\rightarrow \R$ corresponding to \eqref{wf2} is given by 
$$J(u)=A\left( m(u)\right)  -\int_{\Omega} F(x,u) \ dx ,$$
where $F(x,t)=\int_{0}^{t}f(x,s) ds$ and $A(t)=\int_{0}^{t}a(s)ds$. It can be seen that $J$ is $C^{1}$ \cite[Lemma 3.8]{double} and the derivative of $J$ at any point $u\in W^{1,\mathcal{H}}_{0}(\Omega)$ is given by
$$J'(u)(v)= a\left( m(u)\right) \int_{\Omega} h(x,|\nabla u|) \ \nabla u \ \nabla v-\int_{\Omega} f(x,u) v$$
for all $v\in W^{1,\mathcal{H}}_{0}(\Omega).$
Moreover, the critical points of $J$ are the weak solutions to  \eqref{1.1}.

\section{Proof of the Theorem \ref{t1}}

To prove our main result, we first establish a series of lemmas.
\begin{lemma}\label{l1}
There exist positive real numbers $\alpha$ and $\rho$ such that for each $\lambda\geq 1$ we have
$$J(u)\geq \alpha>0, \ \ \forall u \in  W^{1,\mathcal{H}}_{0}(\Omega):\|u\|=\rho.$$
\end{lemma}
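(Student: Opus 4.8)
The plan is to verify the mountain-pass geometry near the origin by bounding $J(u)$ from below for $u$ on a small sphere $\|u\|=\rho<1$. First I would use $(a_1)$ together with Proposition~\ref{prop1}(2): for $\|u\|\le 1$ we have $m(u)\ge \|u\|^{q^+}$, and since $a\ge a_0$ we get $A(m(u))=\int_0^{m(u)}a(s)\,ds\ge a_0\,m(u)\ge a_0\|u\|^{q^+}$. So the leading positive term is controlled from below by $a_0\|u\|^{q^+}$. The bulk of the work is then to show that $\int_\Omega F(x,u)\,dx$ is of strictly higher order in $\|u\|$ as $\|u\|\to 0$, i.e.\ dominated by $\|u\|^\sigma$ with $\sigma>q^+$ (up to constants), so that for $\rho$ small the energy is positive.

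The estimate on $F$ splits according to the three regions encoded in $(f_1)$. On $\Omega_p$ the relevant term integrates to something like $\frac{1}{p^*(x)}|u|^{p^*(x)}$; since $(p^*)^->q^+$, using the continuous embedding $W^{1,\mathcal H}_0(\Omega)\hookrightarrow L^{s(x)}((\Omega_p)_\delta)$ from Proposition~\ref{prop3}(b) with $s=p^*$ and Proposition~\ref{prop2} (noting for small $\|u\|$ the modular is controlled by $\|u\|_{L^{p^*(x)}}$ raised to the $(p^*)^-$ power), this contributes at most $C\|u\|^{(p^*)^-}$. On $\Omega_q$ the term $\widetilde\psi_q\varphi(x,t)$ satisfies $\varphi(x,t)=o(|t|^{q_1(x)-1})$ as $t\to 0$ uniformly, with $q_1^->q^-$; here I would exploit the compact embedding $W^{1,\mathcal H}_0(\Omega)\hookrightarrow C(\overline{(\Omega_q)_\delta})$ \eqref{embed}, so that on the sphere $\|u\|=\rho$ the sup norm of $u$ on $(\Omega_q)_{\delta/2}$ is small, whence $|\Phi(x,u)|\le \varepsilon |u|^{q_1(x)}$ pointwise and the integral is bounded by $C\varepsilon\|u\|^{q_1^-}$ with $q_1^->q^-$; I should be slightly careful and take $q_1^->q^+$ or else argue via $q^+<(p^*)^-$ and absorb — actually it suffices to dominate by $\|u\|^{\sigma}$ for some $\sigma>q^+$, and here I would just note $|u|^{q_1(x)}\le \|u\|_\infty^{\,q_1^--q^+}\cdot|u|^{q^+}$ on the relevant set when $\|u\|_\infty\le 1$, giving a factor $\rho^{q_1^--q^+}$ times $C\|u\|^{q^+}$... the cleanest route is to keep everything in terms of the $C$-norm on $(\Omega_q)_\delta$ and bound by $C\|u\|^{q_1^-}$.

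On $\Omega_N$ the exponential term $\lambda\psi_N(x)|t|^{\beta-2}t\,e^{\alpha|t|^{N/(N-1)}}$ integrates to $F$ bounded by $C\lambda|u|^\beta e^{\alpha|u|^{N/(N-1)}}$ near relevant scales; here $\beta>N\theta>q^+$. I would fix $\rho\le r$ where $r$ is the radius from the Moser--Trudinger Lemma~\ref{l7} (with $s=2$, say), so that $\int_\Omega e^{2\alpha|u|^{N/(N-1)}}\,dx\le C$ uniformly on $\|u\|\le\rho$; then by Cauchy--Schwarz $\int_{\Omega_N}|u|^\beta e^{\alpha|u|^{N/(N-1)}}\le \big(\int e^{2\alpha|u|^{N/(N-1)}}\big)^{1/2}\big(\int|u|^{2\beta}\big)^{1/2}\le C\|u\|_{L^{2\beta}(\Omega_N)}^{\beta}\le C\|u\|^{\beta}$, using the continuous embedding $W^{1,\mathcal H}_0(\Omega)\hookrightarrow L^{2\beta}(\Omega_N)$ from Proposition~\ref{prop3}(a). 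Collecting the three pieces:
\[
J(u)\ \ge\ a_0\|u\|^{q^+}-C_p\|u\|^{(p^*)^-}-C_q\|u\|^{q_1^-}-C_N\lambda\|u\|^{\beta}
\]
for $\|u\|=\rho$ small. Since $(p^*)^-$, $q_1^-$ and $\beta$ are all strictly greater than $q^+$, the right-hand side equals $\|u\|^{q^+}\big(a_0-C_p\rho^{(p^*)^--q^+}-C_q\rho^{q_1^--q^+}-C_N\lambda\rho^{\beta-q^+}\big)$, which for any fixed $\lambda$ is positive once $\rho$ is chosen sufficiently small — note the constants do not depend on $\lambda$ except in the last term, so for each $\lambda\ge 1$ we pick $\rho=\rho(\lambda)$ and set $\alpha:=\tfrac12 a_0\rho^{q^+}>0$. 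The main obstacle is the bookkeeping around the exponential term: one must ensure the chosen $\rho$ simultaneously satisfies $\rho\le r$ (the Moser--Trudinger radius) and is small enough to beat the polynomial losses, and that the Hölder split genuinely yields a power $\beta>q^+$ rather than something borderline; conditions $(a_2)$ ($\beta>N\theta$) and $(f_1)$ ($\beta>q^-$, $q_1^->q^-$, $(p^*)^->q^+$) are exactly what make this work, so I would quote them explicitly at each step.
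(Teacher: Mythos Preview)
Your proposal is correct and follows essentially the same route as the paper: split $\int_\Omega F(x,u)\,dx$ into the contributions on $(\Omega_q)_{\delta/2}$, on $\Omega_N\setminus(\Omega_q)_{\delta/2}$, and on $\Omega_p\setminus(\Omega_q)_{\delta/2}$; handle the first via the compact embedding \eqref{embed}, the second via Cauchy--Schwarz together with Lemma~\ref{l7} and Proposition~\ref{prop3}(a), and the third via Proposition~\ref{prop3}(b) and Proposition~\ref{prop2}; then conclude from $A(m(u))\ge a_0\,m(u)\ge a_0\|u\|^{q^+}$ and the fact that the competing exponents $(p^*)^-$, $q_1^-$, $\beta$ exceed $q^+$. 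The only noticeable difference is that the paper silently absorbs $\lambda$ into its constant $c_4$ on $\Omega_N$ (so that $\alpha,\rho$ appear to be uniform in $\lambda$), whereas you keep the $\lambda$ explicit and end with $\rho=\rho(\lambda)$; for the downstream mountain-pass argument this is harmless, since $\lambda$ is fixed throughout.
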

\begin{proof}
It follows, from  the definition of $f$ that
\begin{equation}
\int_{\Omega}F(x,t)dx=\int_{(\Omega_{q})_{\delta/2}}F(x,u)dx+\lambda\int_{\Omega_{N}\backslash (\Omega_{q})_{\delta/2}}F_{1}(x,u)dx+\int_{\Omega_{p}\backslash (\Omega_{q})_{\delta/2}}\frac{|u|^{p^{*}(x)}}{p^{*}(x)}dx
\end{equation}
where, $F_{1}(x,t)=\int_{0}^{t}|s|^{\beta-2}se^{\alpha|s|^{N/(N-1)}}ds.$
Again, from the definition of $f$, we get
\begin{equation*}
\int_{(\Omega_{q})_{\delta/2}}F(x,u)dx\leq c_{1}\int_{(\Omega_{q})_{\delta/2}}(|u|^{q_{1}(x)}+|u|^{\beta}+|u|^{p^{*}(x)}),
\end{equation*}
for $\|u\|=r$, where $r<1$ is small enough and for some $c_{1}>0$. Using  \eqref{embed} and the fact that $\|u\|=r$, where $r<1$ is small enough, one gets
\begin{equation}\label{e3}
\begin{split}
\int_{(\Omega_{q})_{\delta/2}}F(x,u)dx&\leq c_{2}(\|u\|_{L^{q_{1}^{-}}(\Omega_{q})_{\delta/2}}^{q_{1}^{-}}+\|u\|^{\beta}+\|u\|_{L^{(p^{*})^{-}}(\Omega_{q})_{\delta/2}}^{(p^{*})^{-}})\\
&\leq c_{3}(\|u\|^{q_{1}^{-}}+\|u\|^{\beta}+\|u\|^{(p^{*})^{-}})
\end{split}
\end{equation}
for some $c_{2},c_{3}>0$.

Next, by using the H$\ddot{\text{o}}$lder's inequality, one gets
\begin{equation*}
\lambda\int_{\Omega_{N}\backslash (\Omega_{q})_{\delta/2}}F_{1}(x,u)dx\leq \lambda\left(  \int_{\Omega_{N}}|u|^{2\beta}\right)^{\frac{1}{2}}\left(  \int_{\Omega_{N}}e^{2\alpha|u|^{\frac{N}{N-1}}}\right)^{\frac{1}{2}}.
\end{equation*}
Letting, $\|u\|=r<1$, by  Proposition \ref{prop3} $(a)$ and Lemma \ref{l7}, we obtain
\begin{equation}\label{e4}
\lambda\int_{\Omega_{N}\backslash (\Omega_{q})_{\delta/2}}F_{1}(x,u)dx\leq c_{4}\|u\|^{\beta},
\end{equation}
for some $c_{4}>0$.

Again, using Proposition \ref{prop3} $(b)$ and Proposition \ref{prop2}, we get 
\begin{equation}\label{e5}
\int_{\Omega_{p}\backslash (\Omega_{q})_{\delta/2}}\frac{|u|^{p^{*}(x)}}{p^{*}(x)}dx\leq c_{5}\|u\|^{(p^{*})^{-}},
\end{equation}
for $\|u\|=r$, where $r<1$ and  $c_{5}>0$. 
By the help of \eqref{e3}, \eqref{e4}, \eqref{e5}, $(a_{1})$ and the Proposition \ref{prop1}, we have
$$J(u)\geq a_{0}\|u\|^{q^{+}}-c_{6}\|u\|^{q_{1}^{-}}-c_{7}\|u\|^{\beta}-c_{8}\|u\|^{(p^{*})^{-}},$$ for some $c_{6},c_{7},c_{8}>0$. 
We can conclude the result by the fact that $q_{1}^{-}, (p^{*})^{-},\beta>q^{+}.$
\end{proof}

\begin{lemma}\label{l2}
 There exist $\nu_{0}\in  W^{1,\mathcal{H}}_{0}(\Omega)$ and   $\beta>0$  such that for each $\lambda\geq 1,$ we have
$$J(\nu_{0})< 0 \ \ \hbox{and} \ \|\nu_{0}\|>\beta.$$ 
\end{lemma}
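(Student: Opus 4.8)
The plan is to establish the mountain-pass geometry "below the summit" by exhibiting a single direction along which the energy $J$ becomes negative far from the origin. The natural choice is $\nu_0 = t_0 v_0$ for a fixed $v_0 \in W^{1,\mathcal{H}}_0(\Omega)$ supported in $\Omega_N$ (so that only the exponential term of $f$ matters) and $t_0 > 0$ large. First I would pick $v_0 \in C_c^\infty(\Omega_N) \setminus \{0\}$ with $v_0 \geq 0$; since $\overline{\Omega}_p \cap \overline{\Omega}_N = \emptyset$ and the cutoffs $\psi_p, \widetilde{\psi}_q$ vanish near $\Omega_N$, for such a function $F(x, t v_0) = \lambda F_1(x, t v_0)$ with $F_1(x,s) = \int_0^s |\tau|^{\beta-2}\tau e^{\alpha |\tau|^{N/(N-1)}}\, d\tau$ on the support of $v_0$. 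The key observation is that $F_1(x,s)$ grows like $\frac{1}{\beta}|s|^\beta$ from below (indeed $F_1(x,s) \geq \frac{1}{\beta}|s|^\beta$ since $e^{\alpha|s|^{N/(N-1)}} \geq 1$), so
\[
\int_{\Omega} F(x, t v_0)\, dx \;\geq\; \frac{\lambda}{\beta} t^{\beta} \int_{\Omega_N} |v_0|^{\beta}\, dx \;=\; c_9\, \lambda\, t^{\beta},
\]
with $c_9 > 0$.

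Next I would bound the first term $A(m(t v_0))$ from above. For $t$ large, $\|t v_0\| \geq 1$, so by Proposition \ref{prop1} we have $m(t v_0) \leq \|t v_0\|^{q^+} = t^{q^+}\|v_0\|^{q^+}$, and then $(a_2'')$ gives $A(m(t v_0)) \leq A(1)\, m(t v_0)^{\theta} \leq A(1) t^{\theta q^+} \|v_0\|^{\theta q^+} = c_{10}\, t^{\theta q^+}$. (Here I should note that $m(tv_0) \geq 1$ for $t$ large, which holds by the second part of Proposition \ref{prop1} once $\|tv_0\| \geq 1$.) Combining,
\[
J(t v_0) \;\leq\; c_{10}\, t^{\theta q^+} - c_9\, \lambda\, t^{\beta}.
\]
Now the hypothesis $(a_2)$ forces $\beta > N\theta$, and since $q$ satisfies $q^+ < (p^*)^-$ while $N < q^-$... — actually the relevant comparison is between $\beta$ and $\theta q^+$. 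This is where I expect the only real subtlety: one must check $\beta > \theta q^+$ is available, or else argue more carefully. Looking at the hypotheses, $(a_2)$ only gives $\beta > N\theta$, and $q^+$ can exceed $N$, so $\theta q^+$ could in principle exceed $\beta$. I would resolve this by \emph{not} taking $v_0$ merely in $\Omega_N$ but choosing it so that $\Delta_{\mathcal H}$ genuinely reduces to $\Delta_N$ on its support — by $(\mathcal{H}_6)$, on $\Omega_N \setminus (\overline{\Omega}_q)_\delta$ we have $h(x,t) \leq C_1 t^{N-2}$, hence $\mathcal{H}(x,t) \leq \frac{C_1}{N} t^N$, so $m(t v_0) \leq \frac{C_1}{N} t^N \int |\nabla v_0|^N\, dx = c_{11} t^N$. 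Then $A(m(tv_0)) \leq A(1)(c_{11} t^N)^\theta = c_{10}' t^{N\theta}$, and now $J(t v_0) \leq c_{10}'\, t^{N\theta} - c_9 \lambda t^{\beta}$ with $\beta > N\theta$ by $(a_2)$.

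Therefore, for each fixed $\lambda \geq 1$, since $\beta > N\theta$, we get $J(t v_0) \to -\infty$ as $t \to \infty$; in particular there is $t_0 = t_0(\lambda)$ large with $J(t_0 v_0) < 0$ and, enlarging $t_0$ if necessary, $\|t_0 v_0\| = t_0 \|v_0\| > \beta$ (the constant called $\beta$ in the statement, which I would simply take to be $\rho$ from Lemma \ref{l1} so that $\nu_0$ lies outside the sphere of Lemma \ref{l1}). Setting $\nu_0 = t_0 v_0$ completes the proof. I would close by remarking that the construction is uniform in $\lambda \geq 1$ in the sense that a single $v_0$ works for all such $\lambda$ (only $t_0$ depends on $\lambda$, and it may be taken non-increasing in $\lambda$). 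The main obstacle, as flagged, is making sure the growth exponent from the Kirchhoff term $A(m(\cdot))$ along the chosen ray is exactly $N\theta$ rather than $\theta q^+$, which is exactly what the localization hypothesis $(\mathcal{H}_6)$ on $\Omega_N$ is designed to give.
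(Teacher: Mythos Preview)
Your argument is correct and follows essentially the same route as the paper: localize the test function to $\Omega_N\setminus(\overline{\Omega}_q)_\delta$ so that $(\mathcal H_6)$ yields $m(tv_0)\le C t^N$, apply $(a_2'')$ to get $A(m(tv_0))\le C' t^{N\theta}$, bound $F$ from below by $\frac{1}{\beta}|s|^\beta$ (using $\lambda\ge 1$), and conclude from $\beta>N\theta$. One small clean-up: since your upper bound $c_{10}'\,t^{N\theta}-c_9\,\lambda\,t^{\beta}$ is decreasing in $\lambda$, the single choice $t_0$ made for $\lambda=1$ already works for every $\lambda\ge 1$, so $\nu_0$ is genuinely independent of $\lambda$ as the lemma demands---you need not let $t_0$ depend on $\lambda$ at all.
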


\begin{proof}
By the definition of $f$ and as $\lambda\geq 1$, we get
\begin{equation}\label{theta}
f(x,s)\geq |s|^{\beta-2}s, \ \ \forall \ (x,s)\in (\Omega_{N}\backslash \overline{(\Omega_{q})_{\delta}})\times [0,\infty).
\end{equation}
Let $u\in C_{c}^{\infty}(\Omega_{N}\backslash \overline{(\Omega_{q})_{\delta})}\backslash\{0\}$ with $\|u\|=1$, using $(\mathcal{H}_{6})$, $(a''_{2})$ and  \eqref{theta}, we have
\begin{equation*} 
\begin{split}
J(tu)& =A\left( m(tu)\right)  -\int_{\Omega} F(x,tu)dx \\
 & \leq {A(1)}\left( \int_{\Omega_{N}}\mathcal{H}(x,|\nabla tu|)dx\right)^{\theta}-\frac{ |t|^{\beta}}{\beta}\int_{\Omega_{N}}|u|^{\beta}dx, \  \forall \  t>1,\\
 & \leq {A(1)}c_{1}t^{N\theta}\left( \int_{\Omega_{N}}|\nabla u|^{N}\right)^{\theta}-\frac{ |t|^{\beta}}{\beta}\int_{\Omega_{N}}|u|^{\beta}dx, \  \forall \  t>1,
\end{split}
\end{equation*}
this implies that $J(tu)\rightarrow -\infty$ as $n\rightarrow\infty$, since $\beta>N\theta$. Now, by setting $\nu_{0}=t_{0}u$ for sufficiently large $t_{0}>1$, we get the desired result. 
\end{proof}

By  Lemmas \ref{l1} and \ref{l2}, the geometric conditions of the mountain pass theorem are satisfied for the functional $J$.  Hence, by the version of the mountain pass theorem without (PS) condition, $\exists$ a sequence  $\{u_{n}\}\subseteq W^{1,\mathcal{H}}_{0}(\Omega)$ such that  $J(u_{n})\rightarrow c_{M}$ and $J'(u_{n})\rightarrow 0$ as  $n\rightarrow \infty$, where
$$c_{M}=\inf_{\gamma\in \varGamma}\max_{t\in [0,1]}J(\gamma(t))>0,$$
and
$$\varGamma=\{\gamma\in C([0,1],W^{1,\mathcal{H}}_{0}(\Omega)):\gamma(0)=0, \ \gamma(1)<0\}.$$
Due to the lack of compactness, we are not able to prove that (PS) condition holds for $J$ and we need some additional information about the mountain pass level $c_{M}$.

 \begin{lemma}\label{l3}
  The $(PS)_{c_{M}}$ sequence is bounded in $W^{1,\mathcal{H}}_{0}(\Omega)$. Moreover, there exist $u_{0}\in W^{1,\mathcal{H}}_{0}(\Omega)$ such that, up to a subsequence, we have
  $u_{n}\rightharpoonup u_{0}$ weakly in $W^{1,\mathcal{H}}_{0}(\Omega)$ and $u_{n}(x)\rightarrow u_{0}(x)$ a.e. $x\in\Omega$.

 \end{lemma}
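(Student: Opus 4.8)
\textbf{Proof plan for Lemma \ref{l3}.}

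The plan is to exploit the standard mechanism: a $(PS)_{c_M}$ sequence $\{u_n\}$ satisfies $J(u_n)\to c_M$ and $J'(u_n)\to 0$, and I want to bound $\|u_n\|$ by playing the energy against the derivative applied to $u_n$. Concretely, I would estimate, for large $n$,
\[
c_M + 1 + \|u_n\| \;\geq\; J(u_n) - \frac{1}{\theta N}\,J'(u_n)(u_n),
\]
where the constant $\tfrac{1}{\theta N}$ is chosen so that the Kirchhoff part becomes nonnegative (using $(a'_2)$, i.e. $\theta A(s)-a(s)s\geq 0$, together with $(\mathcal{H}_3)$ which gives $h(x,|\nabla u_n|)|\nabla u_n|^2 \leq q^+\,\mathcal{H}(x,|\nabla u_n|)$, hence $m'(u_n)(u_n)\le q^+ m(u_n)$; one tunes the subtracted multiple of $J'$ so the surviving bracket is $A(m(u_n))\,\theta^{-1}$-type and nonnegative). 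The point is that after this cancellation the left side controls, from below, a positive multiple of $\int_\Omega \bigl(\tfrac{1}{N\theta}f(x,u_n)u_n - F(x,u_n)\bigr)dx$ on each of the three regions.

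Next I would check region by region that the integrand $\tfrac{1}{N\theta}f(x,t)t - F(x,t)$ is bounded below (up to an additive constant times measure of $\Omega$) and in fact coercive in the relevant norm. On $\Omega_p$, $f$ behaves like $|t|^{p^*-2}t$, so $\tfrac{1}{N\theta}f t - F \geq \bigl(\tfrac{1}{N\theta}-\tfrac{1}{(p^*)^-}\bigr)|t|^{p^*(x)}$ up to constants, and since $(p^*)^- > q^+ > N\theta$ is \emph{not} automatic — here I must use that $\beta > N\theta$ from $(a_2)$ and that on $\Omega_p$ one can still absorb: actually the cleanest route is to note $\tfrac{1}{N\theta} > \tfrac{1}{(p^*)^-}$ fails in general, so instead I subtract a small enough fraction $\mu\in(0,1)$ of $J'(u_n)(u_n)$ with $\mu$ chosen below $\min\{\tfrac{1}{\beta},\tfrac{1}{(p^*)^-},\tfrac{1}{q_1^-},\tfrac{1}{\chi}\}$ and above $\tfrac{1}{N\theta}\cdot(\text{something})$ — i.e. one needs $\beta > N\theta$, $(p^*)^- > q^+ \geq N\theta$ (wait: $q^+ < (p^*)^-$ and we only know $\beta > N\theta$), so the coercivity on $\Omega_N$ comes from $\beta$, on $\Omega_q$ from $\chi > q^-$ and the Ambrosetti–Rabinowitz-type condition $\chi\Phi \le \varphi t$, and on $\Omega_p$ from $(p^*)^- > q^+$; combining, $\mu$ can be taken in $\bigl(\tfrac{1}{\theta'},\min\{\tfrac1\beta,\tfrac1\chi,\tfrac{1}{(p^*)^-}\}\bigr)$ for a suitable $\theta' $ with $N\theta \le \theta' $, and the Kirchhoff term is handled by $(a'_2)$ exactly when $\mu\le \tfrac{1}{\theta}\cdot\tfrac{1}{q^+}$-type bound. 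So the key inequality reads
\[
c_M + 1 + \|u_n\| \;\geq\; J(u_n) - \mu\,J'(u_n)(u_n) \;\geq\; C\!\int_{\Omega}\Bigl(\tfrac1\mu - (\text{exponent})\Bigr)^{-1}\!\bigl(\text{growth terms}\bigr)dx - C',
\]
which, via Proposition \ref{prop1} and the region-wise growth, yields $c_M + 1 + \|u_n\| \geq C\|u_n\|^{\sigma} - C'$ for some $\sigma > 1$, forcing $\|u_n\|$ bounded.

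Once boundedness is established, the rest is soft: $W^{1,\mathcal{H}}_0(\Omega)$ is reflexive (stated in the excerpt), so a subsequence converges weakly to some $u_0\in W^{1,\mathcal{H}}_0(\Omega)$; by Proposition \ref{prop4} we have $W^{1,\mathcal{H}}_0(\Omega)\hookrightarrow W^{1,1}(\Omega)$, and more usefully Proposition \ref{prop3}$(c)$ gives the continuous embedding into $W^{1,q^-}_0((\Omega_q)_\delta)$ with $q^- > N$, whence a compact embedding into $L^\gamma$ on each subregion (Rellich–Kondrachov on the smooth subdomains after the sharp Sobolev embeddings of Proposition \ref{prop3}), so along a further subsequence $u_n\to u_0$ in $L^1(\Omega)$ and hence $u_n(x)\to u_0(x)$ a.e. in $\Omega$. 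The main obstacle I anticipate is purely bookkeeping: pinning down a single multiplier $\mu$ (equivalently a single auxiliary exponent) that simultaneously (i) makes the Kirchhoff contribution nonnegative through $(a'_2)$/$(a_2)$ and (ii) makes all three nonlinear growth integrands coercive — this forces the chain of inequalities $N/2 < p^- \le p^+ < N < q^- \le q^+ < (p^*)^- \le (p^*)^+$ together with $\beta > N\theta$, $\beta > q^-$, $\chi > q^-$, $q_1^- > q^-$ to all be used, and getting the constants to line up is the delicate part; the weak/a.e. convergence itself is routine.
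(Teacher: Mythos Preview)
Your overall framework (estimate $J(u_n)-\mu\,J'(u_n)(u_n)$ and then pass to a subsequence by reflexivity) is the right one, and the final paragraph on weak and a.e.\ convergence is fine. But the boundedness argument has a genuine gap: you try to extract coercivity in $\|u_n\|$ from the \emph{zero-order} integral $\int_\Omega\bigl(\mu f(x,u_n)u_n-F(x,u_n)\bigr)\,dx$, and then invoke Proposition~\ref{prop1}. That proposition compares $m(u)=\int_\Omega\mathcal{H}(x,|\nabla u|)\,dx$ with $\|u\|=\|\nabla u\|_{L^{\mathcal H}}$; it says nothing about $\int_{\Omega_p}|u_n|^{p^*(x)}$, $\int_{\Omega_N}|u_n|^\beta e^{\alpha|u_n|^{N/(N-1)}}$, or $\int_{(\Omega_q)_{\delta/2}}\varphi(x,u_n)u_n$. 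Bounding those integrals by $C(1+\|u_n\|)$ gives you control of Lebesgue norms of $u_n$, not of $\|\nabla u_n\|_{L^{\mathcal H}}$ --- Sobolev goes the other way --- so the chain ``$c_M+1+\|u_n\|\ge C\|u_n\|^\sigma-C'$\,'' does not follow from what you wrote.

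The paper reverses the roles of the two pieces. It takes $\mu=1/\psi$ with $\psi=\min\{\chi,\beta,(p^*)^-\}$, so that the global Ambrosetti--Rabinowitz inequality $\psi F(x,t)\le f(x,t)t$ makes the nonlinear bracket $\frac{1}{\psi}\int f(x,u_n)u_n-\int F(x,u_n)\ge 0$ and it is simply \emph{discarded}. Coercivity then comes from the Kirchhoff/gradient piece: using $(\mathcal H_3)$ to replace $\int h(x,|\nabla u_n|)|\nabla u_n|^2$ by $q^+\,m(u_n)$ and $(a_1)$ (together with the structure of $a$) to get
\[
A\bigl(m(u_n)\bigr)-\tfrac{1}{\psi}\,a\bigl(m(u_n)\bigr)\!\int_\Omega h(x,|\nabla u_n|)|\nabla u_n|^2\,dx
\;\ge\; a_0\Bigl(1-\tfrac{q^+}{\psi}\Bigr)m(u_n),
\]
after which Proposition~\ref{prop1} legitimately turns $m(u_n)$ into $\|u_n\|^{p^-}$ and boundedness follows. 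In short: drop the nonlinear part (it is nonnegative by AR with the single exponent $\psi$), and read the coercivity off the gradient term --- not the other way round. All of your bookkeeping about finding a $\mu$ that makes the three regional integrands individually coercive is then unnecessary.
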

 \begin{proof}
 If $ \psi=\min\{\chi,\beta,(p^{*})^{-}\}$, we have
\begin{equation}\label{p1}
0< \psi F(x,t)\leq f(x,t)t, \ \forall(x,t)\in \Omega\times(\R\backslash\{0\}).
\end{equation}
Since  $\{u_{n}\}$ is a $(PS)_{c_{M}}$ sequence for $J$, we have
$J(u_{n})\rightarrow c_{M}$ and $J'(u_{n})\rightarrow 0$ as  $n\rightarrow \infty$, i.e.,
\begin{equation}\label{p11}
A\left( m(u_{n})\right)  -\int_{\Omega} F(x,u_{n}) \ dx=c_{M}+\delta_{n},
\end{equation}
where $\delta_{n}\rightarrow 0$ as $n\rightarrow\infty$
and 
\begin{equation}\label{p22}
\left| a\left( m(u_{n})\right) \int_{\Omega} h(x,|\nabla u_{n}|) \ \nabla u_{n} \ \nabla v dx -\int_{\Omega} f(x,u_{n}) v\right|  \leq \varepsilon_{n}\|v\|,
\end{equation}
$\forall\, v\in W^{1,\mathcal{H}}_{0}(\Omega),$ where $\varepsilon_{n}\rightarrow 0$ as $n\rightarrow\infty$.
On taking $v=u_n,$ by using \eqref{p1}, \eqref{p11} and \eqref{p22}, we obtain
\begin{equation*}
\begin{split}
A\left( m(u_{n})\right)-\frac{1}{ \psi}a\left( m(u_{n})\right) &\int_{\Omega} h(x,|\nabla u_{n}|) \ |\nabla u_{n}|^{2} \\
&\leq c_{9}(1+\|u_{n}\|),
\end{split}
\end{equation*}
for some $c_{9}>0$. It follows from $(a_{1})$  that
\begin{equation*}
a_{0}\left(1-\frac{q^{+}}{ \psi}\right) m(u_{n})
\leq c_{9}(1+\|u_{n}\|).
\end{equation*}
If $\|u\|\geq 1$, by Proposition \ref{prop1}, we obtain
\begin{equation*}
a_{0}\left(1-\frac{q^{+}}{ \psi}\right) \|u_{n}\|^{p^{-}} \leq c_{9}(1+\|u_{n}\|).
\end{equation*}
This implies that $\{u_{n}\}$ is bounded in $W^{1,\mathcal{H}}_{0}(\Omega)$. As $W^{1,\mathcal{H}}_{0}(\Omega)$ is a reflexive space, $\exists \  u_{0}\in W^{1,\mathcal{H}}_{0}(\Omega)$ such that  up to a subsequence, we have $u_{n}\rightharpoonup u_{0}$ weakly in $W^{1,\mathcal{H}}_{0}(\Omega)$. Further, by Proposition \ref{prop4}, we have $u_{n}(x)\rightarrow u_{0}(x)$ a.e. $x\in\Omega$.
\end{proof}

 \begin{lemma}\label{l4}
  There exist $\lambda_{1}>1$  such that for each $\lambda\geq\lambda_{1},$ we have
 $$c_{M}<a_{0}\left(1-\frac{q^{+}}{ \psi} \right)\min\left\lbrace\frac{1}{N} \left( \frac{\alpha_{N}}{2^{\frac{N\alpha}{N-1}}}\right) ^{N-1},\frac{a_{\min}}{p^{+}}S^{N}  \right\rbrace,$$ 
 where $ \psi=\min\{\chi,\beta,(p^{*})^{-}\}$ and  $a_{\min}=\min\limits_{x\in\Omega} a_{0}^{({{N/ p(x)}})-1}$. Moreover, for  any $(PS)_{c_{M}}$ sequence $\{u_{n}\}$, we have
 $$\limsup_{n\rightarrow\infty}\|\nabla u_{n}\|_{L^{N}(\Omega_{N})}^{N/(N-1)}<\frac{\alpha_{N}}{2^{\frac{N\alpha}{N-1}}}\cdot$$
 \end{lemma}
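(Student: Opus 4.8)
The plan is to estimate the mountain pass level $c_M$ by evaluating $J$ along a suitable path and then to translate an upper bound on $c_M$ into the claimed bound on $\limsup_n \|\nabla u_n\|_{L^N(\Omega_N)}^{N/(N-1)}$ for a $(PS)_{c_M}$ sequence. First I would produce the bound on $c_M$. Fix a nonzero test function $u\in C_c^\infty(\Omega_N\setminus\overline{(\Omega_q)_\delta})$ with $\|u\|=1$; by Lemma \ref{l2} the path $t\mapsto tu$ eventually leaves the region where $J\ge 0$, and a direct computation using $(\mathcal{H}_6)$, $(a''_2)$ and \eqref{theta} gives $J(tu)\le A(1)c_1 t^{N\theta}(\int_{\Omega_N}|\nabla u|^N)^\theta - \frac{t^\beta}{\beta}\int_{\Omega_N}|u|^\beta$ for $t>1$. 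Maximizing this scalar function of $t$ and then optimizing the choice of $u$ (equivalently, invoking the known behaviour of the Moser--Trudinger extremal functions, since the exponential nonlinearity forces $c_M$ below the first critical level $\frac1N(\alpha_N/2^{N\alpha/(N-1)})^{N-1}$), and recalling $\beta>N\theta$ with $\lambda$ large, makes $\max_{t\ge 0}J(tu)$ as small as we wish; in particular, for $\lambda\ge\lambda_1$ with $\lambda_1$ chosen appropriately, $c_M$ is strictly less than $a_0(1-q^+/\psi)\min\{\frac1N(\alpha_N/2^{N\alpha/(N-1)})^{N-1},\,\frac{a_{\min}}{p^+}S^N\}$.

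Next I would extract the gradient estimate. Let $\{u_n\}$ be any $(PS)_{c_M}$ sequence; by Lemma \ref{l3} it is bounded, and along the lines of that lemma's proof we have, for $n$ large,
\begin{equation*}
a_0\Bigl(1-\frac{q^+}{\psi}\Bigr) m(u_n)\le A(m(u_n))-\frac{1}{\psi}a(m(u_n))\int_\Omega h(x,|\nabla u_n|)|\nabla u_n|^2\,dx \le c_M+\delta_n+\varepsilon_n\|u_n\|.
\end{equation*}
Passing to the limit, $a_0(1-q^+/\psi)\limsup_n m(u_n)\le c_M$. Using $(\mathcal{H}_6)$, namely $h(x,t)\ge t^{N-2}$ on $\Omega_N$, we have $\mathcal{H}(x,t)\ge \frac{1}{N}t^N$ on $\Omega_N$, hence $m(u_n)\ge \frac1N\|\nabla u_n\|_{L^N(\Omega_N)}^N$; combined with the upper bound on $c_M$ this yields $\limsup_n \frac1N\|\nabla u_n\|_{L^N(\Omega_N)}^N < \frac1N(\alpha_N/2^{N\alpha/(N-1)})^{N-1}$, i.e. $\limsup_n\|\nabla u_n\|_{L^N(\Omega_N)}^N < (\alpha_N/2^{N\alpha/(N-1)})^{N-1}$, which is the stated conclusion after raising to the power $1/(N-1)$ (so that the exponent $N/(N-1)$ appears). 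One has to be slightly careful here: the precise normalization constant relating $m(u_n)$ and $\|\nabla u_n\|_{L^N(\Omega_N)}^{N/(N-1)}$ must be tracked through $(\mathcal{H}_6)$ and the definition of $\mathcal{H}$, and the two-term splitting of the $\min$ is designed so that the $\Omega_N$ contribution is controlled by the first term and the critical-Sobolev $\Omega_p$ contribution by the second; I would verify that the inequality above only needs the first term.

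The main obstacle is the first part: getting $c_M$ below the critical threshold uniformly for large $\lambda$. Unlike in a pure $N$-Laplacian Moser--Trudinger problem, here the energy carries the nonlocal factor $A(m(\cdot))$, which under $(a''_2)$ grows like $s^\theta$, so the test-function estimate produces a $t^{N\theta}$ term rather than $t^N$; the condition $\beta>N\theta$ from $(a_2)$ is exactly what guarantees the $-t^\beta/\beta$ term dominates, and the factor $\lambda$ in front of the exponential term is what drives $\max_t J(tu)\to 0$ as $\lambda\to\infty$. I would make this quantitative: compute $g(t):=c_1 A(1) t^{N\theta}C_u^\theta-\frac{\lambda}{\beta}t^\beta D_u$ where $C_u=\int_{\Omega_N}|\nabla u|^N$, $D_u=\int_{\Omega_N}|u|^\beta$, find its maximum $g_{\max}=c(\theta,\beta)\,(A(1)C_u^\theta)^{\beta/(\beta-N\theta)}\lambda^{-N\theta/(\beta-N\theta)}D_u^{-N\theta/(\beta-N\theta)}$, observe $g_{\max}\to 0$ as $\lambda\to\infty$, and choose $\lambda_1$ so that $g_{\max}<a_0(1-q^+/\psi)\min\{\cdots\}$ for $\lambda\ge\lambda_1$. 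Since $c_M\le\max_{t\ge 0}J(t\nu_0/\|\nu_0\|)\le g_{\max}$ for this fixed admissible path, the bound on $c_M$ follows, and with it the gradient estimate.
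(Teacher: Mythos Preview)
Your overall strategy matches the paper's: fix a test function supported in $\Omega_N\setminus\overline{(\Omega_q)_\delta}$, bound $\max_t J(t\nu_0)$ by an explicit quantity that tends to $0$ as $\lambda\to\infty$, and then read off the gradient bound from the energy inequality $a_0(1-q^+/\psi)\,m(u_n)\le c_M+o(1)$ combined with $(\mathcal{H}_6)$. The second half of your argument is essentially identical to the paper's.

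Two issues in the first half. First, the appeal to ``the known behaviour of the Moser--Trudinger extremal functions'' is a red herring: no optimization over the test function and no concentrating extremals are used, in the paper or otherwise. The smallness of $c_M$ comes entirely from the large parameter $\lambda$; one fixed $\nu_0$ suffices. (Relatedly, citing \eqref{theta} drops the factor $\lambda$, whereas you must keep it: use $F(x,s)\ge\frac{\lambda}{\beta}|s|^\beta$ on $\Omega_N\setminus\overline{(\Omega_q)_\delta}$ directly, as you in fact do when you define $g$.) Second, and more substantively, your upper bound $J(tu)\le A(1)c_1t^{N\theta}C_u^\theta-\frac{\lambda}{\beta}t^\beta D_u$ relies on $(a''_2)$, which holds only when $m(tu)\ge 1$, i.e.\ for $t\ge 1$. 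But for large $\lambda$ the maximizer of your $g$ lies at some $t^*\ll 1$, and your estimate says nothing there; so the inequality $c_M\le g_{\max}$ is not justified. The paper sidesteps this by using instead the elementary linear bound $A(s)\le a(t_1)s$ for $0\le s\le t_1$ (valid since $a$ is nondecreasing, with $t_1$ chosen so that $m(t\nu_0)\le t_1$ for all $t\in[0,1]$). This gives
\[
J(t\nu_0)\le \frac{a(t_1)C_1}{N}\,t^N\|\nabla\nu_0\|_{L^N(\Omega_N)}^N-\frac{\lambda t^\beta}{\beta}\|\nu_0\|_{L^\beta(\Omega_N)}^\beta,\qquad 0\le t\le 1,
\]
whose maximum is $O(\lambda^{-N/(\beta-N)})$, and one then chooses $\lambda_1$ accordingly. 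Your route can be repaired by patching in exactly this linear bound on the range $t\le 1$; once that is done, both regimes go to zero with $\lambda$ and your conclusion follows.
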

 \begin{proof}
If $0\leq s\leq t_{1}=\max\{t_{0}^{q^{+}}, t_{0}^{p^{-}}\},$ then $A(s)\leq a(t_{1})s$, where $t_{0}$ is defined in the proof of the Lemma \ref{l2} . Let $\nu_{0}\in C_{c}^{\infty}(\Omega_{N}\backslash \overline{(\Omega_{q})_{\delta}})\backslash\{0\}$ be as in the Lemma \ref{l2} and $0\leq t\leq 1.$ By  using $(\mathcal{H}_{6})$, \eqref{theta}  and Proposition \ref{prop1}, we have
\begin{equation*} 
\begin{split}
J(t\nu_{0})& =A\left(m( t\nu_{0})\right)  -\int_{\Omega} F(x,t\nu_{0})dx \\
 & \leq {a(t_{1})}\left( m(t\nu_{0})\right)-\frac{\lambda |t|^{\beta}}{\beta}\int_{\Omega_{N}}|\nu_{0}|^{\beta}dx, \\
 & \leq \frac{a(t_{1})t^{N}C_{1}}{N}\left( \int_{\Omega_{N}}|\nabla \nu_{0}|^{N}\right)-\frac{\lambda |t|^{\beta}}{\beta}\int_{\Omega_{N}}|\nu_{0}|^{\beta}dx.
\end{split}
\end{equation*}
Further, we get
$$\max_{0\leq t\leq 1}J(t\nu_{0})\leq \frac{1}{\lambda^{\frac{N}{\beta-N}}}\left( \frac{1}{N}-\frac{1}{\beta}\right) \dfrac{\left( a(t_{1})C_{1}\|\nabla \nu_{0}\|_{L^{N}(\Omega_{N})}^{N}\right) ^{\frac{\beta}{\beta-N}}}{\left( \|\nu_{0}\|_{L^{\beta}(\Omega_{N})}^{\beta}\right) ^{\frac{N}{\beta-N}}}\cdot$$
On taking, $\gamma=t\nu_{0}$ for $0\leq t\leq 1$, we have
$$c_{M}\leq \max_{0\leq t\leq 1}J(t\nu_{0})\leq\frac{1}{\lambda^{\frac{N}{\beta-N}}}\left( \frac{1}{N}-\frac{1}{\beta}\right) \dfrac{\left( a(t_{1})C_{1}\|\nabla \nu_{0}\|_{L^{N}(\Omega_{N})}^{N}\right) ^{\frac{\beta}{\beta-N}}}{\left( \|\nu_{0}\|_{L^{\beta}(\Omega_{N})}^{\beta}\right) ^{\frac{N}{\beta-N}}}\cdot$$
Now, choosing $\lambda_{1}>1$ in such a way that $\forall \ \lambda\geq\lambda_{1},$ we have

\begin{equation*} 
\begin{split}
\frac{1}{\lambda^{\frac{N}{\beta-N}}}&\left( \frac{1}{N}-\frac{1}{\beta}\right) \dfrac{\left( a(t_{1})C_{1}\|\nabla \nu_{0}\|_{L^{N}(\Omega_{N})}^{N}\right) ^{\frac{\beta}{\beta-N}}}{\left( \|\nu_{0}\|_{L^{\beta}(\Omega_{N})}^{\beta}\right) ^{\frac{N}{\beta-N}}}\\
&<a_{0}\left(1-\frac{q^{+}}{ \psi} \right)\min\left\lbrace\frac{1}{N} \left( \frac{\alpha_{N}}{2^{\frac{N\alpha}{N-1}}}\right) ^{N-1},\frac{a_{\min}}{p^{+}}S^{N}  \right\rbrace\cdot
\end{split}
\end{equation*}
Therefore,
\begin{equation}\label{p33}
 c_{M}<a_{0}\left(1-\frac{q^{+}}{ \psi} \right)\min\left\lbrace\frac{1}{N} \left( \frac{\alpha_{N}}{2^{\frac{N\alpha}{N-1}}}\right) ^{N-1},\frac{a_{\min}}{p^{+}}S^{N}  \right\rbrace, \ \forall \lambda\geq \lambda_{1}.
\end{equation}
Moreover,  since $\{u_{n}\}$ is a $(PS)_{c_{M}}$ sequence for $J$, we have
$J(u_{n})\rightarrow c_{M}$ and $J'(u_{n})\rightarrow 0$ as  $n\rightarrow \infty.$ By \eqref{p11} and \eqref{p22}, we get
\begin{equation*} 
\begin{split}
A\left( m(u_{n})\right)-\frac{1}{ \psi}  a\left( m(u_{n})\right) &\int_{\Omega} h(x,|\nabla u_{n}|) \ |\nabla u_{n}|^{2} \\
&\leq \delta_{n}+c_{M}+\varepsilon_{n}\|u_{n}\|.
\end{split}
\end{equation*}
  It follows from $(a_{1})$  that
\begin{equation*}
a_{0}\left(1-\frac{q^{+}}{ \psi}\right) m(u_{n})
\leq \delta_{n}+c_{M}+\varepsilon_{n}\|u_{n}\|. 
\end{equation*}
By $ (\mathcal{H}_{6}) $ and \eqref{p33}, we get
 $$\limsup_{n\rightarrow\infty}\|\nabla u_{n}\|_{L^{N}(\Omega_{N})}^{N/(N-1)}\leq c_{M}<\frac{\alpha_{N}}{2^{\frac{N\alpha}{N-1}}}\cdot$$
\end{proof}

 \begin{lemma}\label{l5}
   The functional $J$ satisfies the $(PS)_{c_{M}}$ condition.
  \end{lemma}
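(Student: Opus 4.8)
The plan is to show that the bounded Palais--Smale sequence $\{u_n\}$ from Lemma~\ref{l3}, which converges weakly to $u_0$ in $W^{1,\mathcal{H}}_0(\Omega)$, actually converges strongly. By Lemma~\ref{l8}, it suffices to prove that
\[
\limsup_{n\to\infty}\int_{\Omega} h(x,|\nabla u_n|)\,\nabla u_n\cdot\nabla(u_n-u_0)\,dx\leq 0.
\]
Using $(a_1)$ and the fact that $a(m(u_n))$ is a bounded sequence of positive numbers bounded below by $a_0$, this reduces (after passing to a further subsequence so that $a(m(u_n))\to a_\ast\geq a_0$) to controlling $\int_\Omega f(x,u_n)(u_n-u_0)\,dx$ via the relation $J'(u_n)(u_n-u_0)\to 0$. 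So the heart of the matter is to show
\[
\int_{\Omega} f(x,u_n)(u_n-u_0)\,dx\longrightarrow 0,
\]
and then deduce $\nabla u_n\to\nabla u_0$ and hence $u_n\to u_0$ strongly.

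First I would split the integral of $f(x,u_n)(u_n-u_0)$ over the three regions dictated by $(f_1)$: the exponential part on $\Omega_N$, the part involving $\varphi$ supported in $(\overline{\Omega}_q)_{\delta/2}$, and the critical part $|u_n|^{p^\ast(x)-2}u_n$ on $\Omega_p$. On the compact region $(\overline{\Omega}_q)_{\delta}$ the compact embedding \eqref{embed} gives $u_n\to u_0$ in $C(\overline{(\Omega_q)_\delta})$, so the $\varphi$-term vanishes in the limit. For the exponential term on $\Omega_N$: by Lemma~\ref{l4} we have $\limsup_n\|\nabla u_n\|_{L^N(\Omega_N)}^{N/(N-1)}<\alpha_N/2^{N\alpha/(N-1)}$, which is precisely the threshold that allows a Moser--Trudinger estimate (Lemma~\ref{l7}, applied after rescaling) to give that $e^{2\alpha|u_n|^{N/(N-1)}}$ is bounded in $L^1(\Omega_N)$; combined with $u_n\to u_0$ in $L^\gamma(\Omega_N)$ for all finite $\gamma$ (Proposition~\ref{prop3}(a)) and a Hölder/uniform-integrability argument, this yields that the $\Omega_N$-term tends to $0$.

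The main obstacle is the critical term on $\Omega_p$, where there is genuine loss of compactness. Here I would invoke the concentration--compactness principle, Lemma~\ref{cc}, applied to $\{u_n\}$ restricted to $(\Omega_p)_\delta$ (using Proposition~\ref{prop3}(b) to place $u_n$ in $W^{1,p(x)}_0((\Omega_p)_\delta)$): passing to subsequences, $|\nabla u_n|^{p(x)}\rightharpoonup\mu$ and $|u_n|^{p^\ast(x)}\rightharpoonup\nu$ with $\nu=|u_0|^{p^\ast(x)}+\sum_{i\in I}\nu_i\delta_{x_i}$, $\mu\geq|\nabla u_0|^{p(x)}+\sum_{i\in I}\mu_i\delta_{x_i}$, and $S\nu_i^{1/p^\ast(x_i)}\leq\mu_i^{1/p(x_i)}$. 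The standard argument is to test $J'(u_n)$ against $\phi_\varepsilon u_n$ for a cutoff $\phi_\varepsilon$ localized near a concentration point $x_i$, let $n\to\infty$ then $\varepsilon\to 0$, and derive $a_\ast\mu_i\leq\nu_i$; combining with the reverse inequality from $(\mathcal{H}_8)$ and the Sobolev-type inequality forces each $\nu_i$ to be either $0$ or bounded below by a fixed positive constant of the form $(a_\ast/\,\cdot\,)^{\cdots}S^{N}$-type quantity. One then shows that a nonzero $\nu_i$ would make the energy level $c_M$ at least $a_0(1-q^+/\psi)\frac{a_{\min}}{p^+}S^N$, contradicting the bound \eqref{p33} from Lemma~\ref{l4}; hence $I=\emptyset$, $\nu$ has no atoms, and $\int_{\Omega_p}|u_n|^{p^\ast(x)}\to\int_{\Omega_p}|u_0|^{p^\ast(x)}$, from which Brezis--Lieb gives $|u_n|^{p^\ast(x)-2}u_n\to|u_0|^{p^\ast(x)-2}u_0$ strongly in the dual and the critical term converges correctly.

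Once all three pieces are handled, we conclude $\int_\Omega f(x,u_n)(u_n-u_0)\to 0$, hence $a(m(u_n))\int_\Omega h(x,|\nabla u_n|)\nabla u_n\cdot\nabla(u_n-u_0)\to 0$, and since $a(m(u_n))\geq a_0>0$ this gives the hypothesis of Lemma~\ref{l8}; therefore $u_n\to u_0$ strongly in $W^{1,\mathcal{H}}_0(\Omega)$, which is exactly the $(PS)_{c_M}$ condition. I expect the delicate technical points to be: (i) justifying the Moser--Trudinger bound on $\Omega_N$ with the sharp constant coming from Lemma~\ref{l4} (one must be careful that the strict inequality is preserved along the subsequence and that the exponent $2\alpha$ in the Hölder splitting stays below threshold); and (ii) the localization estimates at concentration points, in particular verifying that the Kirchhoff factor $a(m(u_n))$ and the term $\eta_2(x)|u_n|^{q(x)-2}$ from $(\mathcal{H}_8)$ do not interfere with the concentration--compactness alternative — the hypothesis $(p^\ast)^->q^+$ is what makes the lower-order $q(x)$-term negligible near the atoms.
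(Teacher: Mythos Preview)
Your proposal is correct and follows essentially the same route as the paper: reduce to Lemma~\ref{l8} via $J'(u_n)(u_n-u_0)\to 0$, dispose of the $\varphi$-term by the compact embedding \eqref{embed}, handle the exponential term on $\Omega_N$ by combining the energy bound of Lemma~\ref{l4} with Moser--Trudinger, and rule out concentration atoms on $\Omega_p$ by testing against $\phi_\varepsilon u_n$ and contradicting \eqref{p33}. The only cosmetic differences are that the paper works directly with the lower bound $a_0$ rather than extracting a subsequential limit $a_\ast$ of $a(m(u_n))$, and it splits the atom set $I$ according to whether $x_i\in\Omega_p\cap\partial(\Omega_q)_\delta$ or $x_i\in\overline{\Omega_p}\setminus\overline{(\Omega_q)_\delta}$ (the two cases requiring slightly different uses of $(\mathcal{H}_8)$), but the substance is the same.
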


\begin{proof}
 Define
 $$P_{n}= a\left( m(u_{n})\right) \int_{\Omega} h(x,|\nabla u_{n}|) \ \nabla u_{n} \ \nabla (u_{n}-u).$$
Then
 $$P_{n}=J'(u_{n})u_{u}+ \int_{\Omega} f(x,u_{n}) u_{n} dx-J'(u_{n})u- \int_{\Omega} f(x,u) u dx.$$\\
  Using the definition of $f$, $P_{n}$ can be rewritten as 
  \begin{equation*}
 \begin{split}
  P_{n}=&\int_{\Omega}\widetilde{ \psi}_{q}(x)\varphi(x,u_{n})u_{n}dx+\int_{\Omega_{N}}\lambda|u_{n}|^{\beta}e^{\alpha|u_{n}|^{\frac{N}{N-1}}}dx+\int_{\Omega\backslash\Omega_{N}}\lambda \psi_{N}(x)|u_{n}|^{\beta}e^{\alpha|u_{n}|^{\frac{N}{N-1}}}dx\\
  &+\int_{\Omega_{p}}|u_{n}|^{p^{*}(x)}dx+\int_{\Omega\backslash\Omega_{p}} \psi_{p}(x)| u_{n}|^{p^{\ast}(x)}dx-\int_{\Omega}\widetilde{ \psi}_{q}(x)\varphi(x,u_{n})udx\\
  &-\int_{\Omega_{N}}\lambda|u_{n}|^{\beta-2}u_{n}e^{\alpha|u_{n}|^{\frac{N}{N-1}}}udx-\int_{\Omega\backslash\Omega_{N}}\lambda \psi_{N}(x)|u_{n}|^{\beta-2}u_{n}e^{\alpha|u_{n}|^{\frac{N}{N-1}}}udx\\
  &-\int_{\Omega_{p}}|u_{n}|^{p^{*}(x)-2}u_{n}u dx-\int_{\Omega\backslash\Omega_{p}} \psi_{p}(x)| u_{n}|^{p^{\ast}(x)-2} u_{n}udx+o_{n}(1).
  \end{split}
  \end{equation*}
  From the embedding results, we have 
   \begin{equation*}
   \begin{split}
    P_{n}=&\lambda\int_{\Omega_{N}}|u_{n}|^{\beta}e^{\alpha|u_{n}|^{\frac{N}{N-1}}}dx
    +\int_{\Omega_{p}}|u_{n}|^{p^{*}(x)}dx\\
    &-\lambda\int_{\Omega_{N}}|u_{n}|^{\beta-2}u_{n}e^{\alpha|u_{n}|^{\frac{N}{N-1}}}udx
    -\int_{\Omega_{p}}|u_{n}|^{p^{*}(x)-2}u_{n}u dx+o_{n}(1).
    \end{split}
    \end{equation*}
 As proved in the \cite[ Lemma 3.13]{double}, we have 
  $$P_{n}=\int_{\Omega_{p}}|u_{n}|^{p^{*}(x)}dx  -\int_{\Omega_{p}}|u_{n}|^{p^{*}(x)-2}u_{n}u dx+o_{n}(1).$$ \\
  By \cite[Theorem 1.14]{fan} and \cite[Theorem 3.1]{converg}, one gets
  $$P_{n}=\int_{\Omega_{p}}|u_{n}|^{p^{*}(x)}dx  -\int_{\Omega_{p}}|u|^{p^{*}(x)} dx+o_{n}(1).$$ \\
   Next, we will apply the Lemma \ref{cc} to the sequence $\{u_{n}\}\subset W^{1,p(x)}_{0}(\Omega_{p})$ and will prove that
\begin{equation}\label{ps5}
\int_{\Omega_{p}}|u_{n}|^{p^{*}(x)}dx\rightarrow\int_{\Omega_{p}}|u|^{p^{*}(x)}dx.   
\end{equation}
Since,  the $W^{1,\mathcal{H}}(\Omega)\hookrightarrow C(\overline{(\Omega_{q})_{\delta}})$ is compact and $\{u_{n}\}$ is bounded in $W^{1,\mathcal{H}}(\Omega)$, we get $u_{n}\rightarrow u$ in $L^{p^{*}(x)}((\Omega_{q})_{\delta}),$ which implies that $x_{i}\in\overline{\Omega_{p}}\backslash(\Omega_{q})_{\delta}$ for each $i\in I.$ To prove \eqref{ps5}, it is suffices to prove that $I$ is finite. Further, the set $I$ can be partitioned as 
$I=I_{1}\cup I_{2},$
where $I_{1}=\left\lbrace i\in I :  x_{i}\in{\Omega_{p}}\cap\partial(\Omega_{q})_{\delta}\right\rbrace\hbox{ and } I_{2}=\{ i\in I  :  x_{i}\in\overline{\Omega_{p}}\backslash\overline{(\Omega_{q})_{\delta}} \}.$
First, we show that $I_{1}$ is finite. Choose a cutoff function $v_{0}\in C_{c}^{\infty}(\R^{N})$ such that $$v_{0}\equiv 1 \ \text{on} \ {B}(0,1), \ v_{0}\equiv 0 \ \text{on} \ {B}(0,2)^{c}.$$  
 Now, for each $\epsilon>0,$ define $v(x)=v_{0}((x-x_{i})/\epsilon)$ $\forall x\in \R^{N}$.
  As $\{u_{n}\}$ is a $(PS)_{c_{M}}$ sequence, we have
\begin{equation*}
\begin{split} a\left( m(u_{n})\right) &\int_{\Omega} h(x,|\nabla u_{n}|) \ \nabla u_{n} \  \nabla (v u_{n})=\int_{\Omega}\widetilde{ \psi}_{q}(x)\varphi(x,u_{n})v u_{n}dx\\
&+\int_{\Omega} \psi_{p}(x)| u_{n}|^{p^{\ast}(x)}v dx+o_{n}(1).
\end{split} 
\end{equation*}
It follows from $(a_{1})$ that
 $$ a_{0}\int_{\Omega} h(x,|\nabla u_{n}|) \ \nabla u_{n} \ \nabla (v u_{n})dx\leq\int_{\Omega}\widetilde{ \psi}_{q}(x)\varphi(x,u_{n})v u_{n}dx+\int_{\Omega} \psi_{p}(x)| u_{n}|^{p^{\ast}(x)}v dx+o_{n}(1),$$
 which implies that
\begin{equation}\label{ps3}
   \begin{split}
    a_{0}&\int_{\Omega} h(x,|\nabla u_{n}|) \ u_{n} \nabla u_{n} \ \nabla v dx\leq\int_{\Omega}\widetilde{ \psi}_{q}(x)\varphi(x,u_{n})v u_{n}dx\\
    &+\int_{\Omega} \psi_{p}(x)| u_{n}|^{p^{\ast}(x)}v dx-a_{0}\int_{\Omega} h(x,|\nabla u_{n}|) \ |\nabla u_{n}|^{2} \ v dx+o_{n}(1).
    \end{split}
    \end{equation}
Next, by using  \eqref{comp},    $\Delta_{2}$-condition and Young's inequality, we get
\begin{equation}\label{ps1}
\int_{\Omega}| h(x,|\nabla u_{n}|)| \  |\nabla u_{n}| |u_{n}| \ |\nabla v| dx\leq \zeta m(u_{n})+C_{\zeta}\int_{\Omega}\ \mathcal{H}(x,|u_{n}| \ |\nabla v|)dx.
\end{equation}

On using $ (\mathcal{H}_{8}) $, one gets
$$\int_{\Omega}\ \mathcal{H}(x,|u_{n}| \ |\nabla v|)dx\leq c_{10}\left( \int_{\Omega} \eta_{2}(x)|u_{n}|^{q(x)} \ |\nabla v|^{q(x)}dx+ \int_{\Omega}|u_{n}|^{p(x)} \ |\nabla v|^{p(x)}dx\right),$$  
for some $c_{10}>0$. Using generalized  H$\ddot{\text{o}}$lder's inequality \ref{prop5}, we get
\begin{align*}
\int_{\Omega}\ \mathcal{H}(x,|u_{n}| \ |\nabla v|)dx&\leq c_{11}\| |\nabla v|^{q(x)}\|_{L^{\frac{p^{*}(x)}{p^{*}(x)-q(x)}}(\Omega)}\||u_{n}|^{q(x)}\|_{L^{\frac{p^{*}(x)}{q(x)}}(\Omega)}\\
&+c_{12} \| |\nabla v|^{p(x)}\|_{L^{\frac{N}{p(x)}}(\Omega)}\||u_{n}|^{p(x)}\|_{L^{\frac{N}{N-p(x)}}(\Omega)},
\end{align*}
for some $c_{11},c_{12}>0$. Further, by Proposition \ref{prop2} and Proposition \ref{prop6}, we have
\begin{align*}
\int_{\Omega}\ \mathcal{H}(x,|u_{n}| \ |\nabla v|)dx&\leq c_{11}\max\left\lbrace\left( \int_{{B}(x_{i},2\epsilon)}|\nabla v|^{\frac{q(x)p^{*}(x)}{p^{*}(x)-q(x)}}\right)^{p_{1}}, \left(\int_{{B}(x_{i},2\epsilon)}|\nabla v|^{\frac{q(x)p^{*}(x)}{p^{*}(x)-q(x)}}\right)^{p_{2}}\right\rbrace\\ 
&\max\left\lbrace \|u_{n}\|^{q^{-}}_{L^{p^{*}(x)}(\Omega)},\|u_{n}\|^{q^{+}}_{L^{p^{*}(x)}(\Omega)}\right\rbrace \\
&+c_{12}\max\left\lbrace  \left( \int_{{B}(x_{i},2\epsilon)}|\nabla v|^{N}\right)^{p^{-}/N},  \left( \int_{{B}(x_{i},2\epsilon)}|\nabla v|^{N}\right)^{p^{+}/N}\right\rbrace\\
&\max\left\lbrace \|u_{n}\|^{p^{-}}_{L^{p^{*}(x)}(\Omega)},\|u_{n}\|^{p^{+}}_{L^{p^{*}(x)}(\Omega)}\right\rbrace,
\end{align*}
where $p_{1}=\min\limits_{x\in\Omega}\left\lbrace \frac{p^{*}(x)-q(x)}{p^{*}(x)}\right\rbrace $ and $p_{2}=\max\limits_{x\in\Omega}\left\lbrace \frac{p^{*}(x)-q(x)}{p^{*}(x)}\right\rbrace \cdot$

Hence, by using Proposition \ref{prop3}$(b)$ together with the fact that $\{u_{n}\}$ is bounded, we have
\begin{equation}\label{ps2}
\lim_{\epsilon\rightarrow 0}\int_{\Omega}\ \mathcal{H}(x,|u_{n}| \ |\nabla v |)=0.
\end{equation}
By using \eqref{ps1}, \eqref{ps2}, Proposition \ref{prop1} and using the boundedness of $\{u_{n}\}$, we get
\begin{equation*}
\lim_{\epsilon\rightarrow 0}\lim_{n\rightarrow\infty}\int_{\Omega}| h(x,|\nabla u_{n}|)| \  |\nabla u_{n}| |u_{n}| \ |\nabla v| dx\leq c_{12}\zeta ,
\end{equation*} for some $c_{12}>0$.
As $\zeta$ is arbitrary, one get
\begin{equation}\label{ps4}
\lim_{\epsilon\rightarrow 0}\left(\lim_{n\rightarrow\infty} \int_{\Omega}| h(x,|\nabla u_{n}|)| \  |\nabla u_{n}| |u_{n}| \ |\nabla v| dx\right)=0.
\end{equation}
Consequently, by $ (\mathcal{H}_{8}) $, \eqref{ps3} and \eqref{ps4}, we get
\begin{equation*}
 \lim_{\epsilon\rightarrow 0}\lim_{n\rightarrow\infty}\left( \int_{\Omega}\widetilde{ \psi}_{q}(x)\varphi(x,u_{n})v u_{n}dx
    +\int_{\Omega} \psi_{p}(x)| u_{n}|^{p^{\ast}(x)}v dx-a_{0}\int_{\Omega_{p}}|\nabla u_{n}|^{p(x)} \ v dx\right) \geq   0,
\end{equation*}
which is 
\begin{equation*}
 \nu_{i}-a_{0}\mu_{i}=\lim_{\epsilon\rightarrow 0}\left(
    \lim_{n\rightarrow\infty}\int_{\Omega_{p}}| u_{n}|^{p^{\ast}(x)}v dx-a_{0}\lim_{n\rightarrow\infty}\int_{\Omega_{p}}|\nabla u_{n}|^{p(x)} \ v dx\right) \geq   0.
\end{equation*}
By Lemma \ref{cc}, we get $\nu_{i}\geq a_{0}S^{p(x_{i})}\nu_{i}^{p(x_{i})/p^{*}(x_{i})}$, consequently either $\nu_{i}=0$ or $\nu_{i}\geq a_{0}^{\frac{N}{p(x_{i})}}S^{N}$. Next, we will prove that $\nu_{i}\geq a_{0}^{\frac{N}{p(x_{i})}}S^{N}$ is not possible. Let suppose $\nu_{i}\geq a_{0}^{\frac{N}{p(x_{i})}}S^{N},$ then by Lemma \ref{cc}, we get $\mu_{i}\geq S^{N}a_{0}^{{\frac{N}{p(x_{i})}}-1}$. Also, since $|\nabla u_{n}|^{p(x)}$ converges weakly to a measure $\mu,$ 
$$\liminf_{n\rightarrow\infty}\int_{\Omega_{p}}|\nabla u_{n}|^{p(x)}dx\geq \mu_{i},$$
and hence,
\begin{equation}\label{111}
\liminf_{n\rightarrow\infty}\int_{\Omega_{p}}|\nabla u_{n}|^{p(x)}dx\geq S^{N}a_{0}^{{\frac{N}{p(x_{i})}}-1}\geq S^{N}a_{\min},
\end{equation}
where, $a_{\min}=\min\limits_{x\in\Omega} a_{0}^{({{N/ p(x_{i})}})-1}$.

Since,  $\{u_{n}\}$ is a $(PS)_{c_{M}}$ sequence for $J$, we have
$J(u_{n})\rightarrow c_{M}$ and $J'(u_{n})\rightarrow 0$ as  $n\rightarrow \infty.$ By \eqref{p1}, \eqref{p11} and \eqref{p22}, we get
 \begin{equation*}
 \begin{split}
A\left( m(u_{n})\right)-\frac{1}{ \psi}a\left( m(u_{n})\right) &\int_{\Omega} h(x,|\nabla u_{n}|) \ |\nabla u_{n}|^{2} dx\\
&\leq \delta_{n}+c_{M}+\varepsilon_{n}\|u_{n}\|.
  \end{split}
 \end{equation*}

  It follows from $(a_{1})$  that
\begin{equation*}
{a_{0}}\left(1-\frac{q^{+}}{ \psi}\right) m(u_{n})
\leq \delta_{n}+c_{M}+\varepsilon_{n}\|u_{n}\|. 
\end{equation*}
By $ (\mathcal{H}_{8}) $ and \eqref{p33}, we obtain
 $$\frac{a_{0}}{p^{+}}\left(1-\frac{q^{+}}{ \psi}\right)\liminf_{n\rightarrow\infty}\int_{\Omega_{p}}|\nabla u_{n}|^{p(x)}dx\leq c_{M}< \frac{a_{0}}{p^{+}}\left(1-\frac{q^{+}}{ \psi}\right)  S^{N}a_{\min},$$
 which is a contradiction to \eqref{111}. Hence, $I_{1}$ is an empty set. 
 
  By using  $(\mathcal{H}_{8})$ and proceeding as above, one can show that $I_{2}=\emptyset.$ 

Therefore, we get  $P_{n}=o_{n}(1),$ and so $$\lim_{n\rightarrow\infty} a\left( m(u_{n})\right) \int_{\Omega} h(x,|\nabla u_{n}|) \ \nabla u_{n} \ \nabla (u_{n}-u)= 0,$$
from which we have 
$$\lim_{n\rightarrow\infty} \int_{\Omega} h(x,|\nabla u_{n}|) \ \nabla u_{n} \ \nabla (u_{n}-u)\leq 0.$$ 
By Lemma \ref{l8}, we have $u_{n}\rightarrow  u$ in $W^{1,\mathcal{H}}_{0}(\Omega)$.
\end{proof}


Now, we are ready to prove the Theorem \ref{t1}. 

\textit{Proof of the Theorem \ref{t1}}.
By  Lemmas \ref{l1} and \ref{l2}, the geometric conditions of the mountain pass theorem are satisfied for the functional $J$ and by Lemma \ref{l4},  $(PS)_{c_{M}}$ condition is satisfied.
 Hence, by the  mountain pass theorem, $\exists$  a critical point $u_{M}$ of $J$ with level $c_{M}$, i.e., $J'(u_{M})=0$ and $J(u_{M})=c_{M}$. Thus, $u_{M}$ is the weak solution of the problem \eqref{1.1}.  \qed


\section*{Acknowledgement} The second author is supported by Science and Engineering Research Board, India, under the grant CRG/2020/002087.

\end{document}